\newtheorem{theorem}{Theorem}[section]
\newtheorem{remark}{Remark}[section]
\newtheorem{definition}{Definition}[section]
\DeclareMathOperator{\tv}{Tot.Var.}
\DeclareMathOperator{\sign}{sign}
\begin{document}

\title{The Cauchy Problem for a non Strictly Hyperbolic $3\times3$
  System 
  of Conservation
  Laws Arising in Polymer Flooding}

\author{Graziano Guerra\thanks{Department of Mathematics and its Applications,
  University of Milano - Bicocca, Italy, (graziano.guerra@unimib.it).}
          \and Wen Shen\thanks{Mathematics Department, Pennsylvania State University, USA, (wxs27@psu.edu).}}

        \pagestyle{myheadings}
        \markboth{Polymer Flooding}{G.~Guerra and W.~Shen}
        \maketitle

\begin{abstract}
  We study the Cauchy problem of a $3\times 3$ system of conservation laws 
  modeling two--phase flow of polymer flooding in rough porous media  
  with possibly discontinuous permeability function. 
  The system loses strict hyperbolicity in some regions of the domain where the eigenvalues
  of different families coincide, and BV estimates are not available in general. 
  For a suitable $2\times 2$ system,
  a singular change of variable introduced by Temple~\cite{T82,IT86}
  could be effective to control the total variation~\cite{WSCauchy}.
  An extension  of this technique can be applied to a $3\times 3$ system only under 
  strict hypotheses on the flux functions~\cite{CocliteRisebro}. 
  In this paper, through an adapted front tracking algorithm we prove the existence
  of solutions for the Cauchy problem under  mild assumptions on the
  flux function,  using a compensated compactness argument. 

  \smallskip
  \noindent
  \textbf{Keywords:} Conservation Laws; Discontinuous Flux; Compensated
  Compactness; Polymer Flooding; Wave Front Tracking; Degenerate Systems

  \smallskip
  \noindent
  \textbf{AMS subject classifications:}   35L65; 35L45; 35L80; 35L40; 35L60
\end{abstract}

\section{Introduction}

We consider a simple model for polymer flooding in two phase flow 
in rough media
\begin{equation}\label{eq1}
\begin{cases}
 \partial_{t} s + \partial_{x}f(s,c,k) &=~0 , \\
\partial_{t} [cs] + \partial_{x}[cf(s,c,k)] &=~0,\\
 \partial_{t}k &=~0,
\end{cases}
\end{equation}
associated with the initial data
\begin{equation}
  \label{eq:init_data}
  \left(s,c,k\right)(0,x)=\left(\bar s,\bar c,\bar k\right)(x),\quad x\in\mathbb{R}.
\end{equation}
Here, the unknown vector is $(s,c,k)$, where 
$s$ is the saturation of water phase, $c$ is the fraction of polymer dissolved in water,
and $k$ denotes the permeability of the porous media.
We see that $k$ does not change in time, $k(t,x) =\bar k(x)$, 
and the initial data $\bar k(\cdot)$ might be discontinuous.

We neglect both the adsorption of polymers in the porous media and
the gravitational force, where the solution to the Riemann problem becomes more complex. 
For such Riemann solvers, see~\cite{JohWin,MR3663611} 
for the effect of the adsorption term, 
and~\cite{WSCauchy} for the addition of the gravitational force term.
In particular, when the adsorption effect is included, the $c$
family described below would no longer be linearly degenerate, 
while adding the gravitational force term, the $s$ waves described below could have
negative speed. Both effects would disrupt the carefully designed wave
front tracking algorithm we use to prove the main theorem.

The conserved quantities and their fluxes are given by, respectively
\begin{displaymath}
  \mathbf{G}\left(s ,c ,k \right)
  =
  \begin{pmatrix}
    s \\
    c s \\
    k 
  \end{pmatrix},
\qquad
  \mathbf{F}\left(s ,c ,k \right)
  =
  \begin{pmatrix}
    f\left(s ,c ,k \right)\\
    c  f\left(s ,c ,k \right)\\
    0
  \end{pmatrix}.
\end{displaymath}
Denoting the three families as the $s$, $c$ and $k$ family, 
we have the following 3 eigenvalues as functions of the variables
$\left(\sigma,\gamma,\kappa\right)$ in the $\left(s,c,k\right)$ space.
\begin{displaymath}
  \lambda_s= \partial_{\sigma}f\left(\sigma,\gamma,\kappa\right), \qquad 
  \lambda_c=\frac{f\left(\sigma,\gamma,\kappa\right)}{\sigma},\qquad
  \lambda_k=0,
\end{displaymath}
and the three corresponding right eigenvectors (in the
$\left(s,c,k\right)$ space): 
\begin{displaymath}
  r_s=\begin{pmatrix}1 \\ 0 \\ 0 \end{pmatrix},
  \qquad
  r_c= 
  \begin{pmatrix} -\partial_{\gamma}f\left(\sigma,\gamma,\kappa\right)
 \\ 
 \partial_{\sigma}f\left(\sigma,\gamma,\kappa\right)-
 \frac{f\left(\sigma,\gamma,\kappa\right)}{\sigma} \\  0  \end{pmatrix}, 
  \qquad 
  r_k=\begin{pmatrix}
    -\partial_{\kappa}f\left(\sigma,\gamma,\kappa\right)\\ 0 
    \\ \partial_{\sigma}f\left(\sigma,\gamma,\kappa\right)\end{pmatrix}.
\end{displaymath}

A straight computation shows that both the $c$ and $k$ families are linearly degenerate.
Furthermore, there exist regions in the domain such that $\lambda_s=\lambda_c$ 
or $\lambda_s=\lambda_c=\lambda_k$, where the system is parabolic degenerate.

The flux function $f\left(\sigma,\gamma,\kappa\right)$
has the following properties.  
For any given $(\gamma,\kappa)$, the mapping $\sigma\mapsto
f\left(\sigma,\gamma,\kappa\right)$ is the well-known S-shaped 
Buckley-Leverett function~\cite{BL}
with a single inflection point, see Fig.~\ref{fig:fg}. 
To be specific, we have 
\begin{displaymath}
  f(\sigma,\gamma,\kappa)\in[0,1], \qquad \partial_{\sigma}f(\sigma,\gamma,\kappa)\ge0, \qquad \mbox{for all } (\sigma,\gamma,\kappa),
\end{displaymath}
and, for all $(\gamma,\kappa)$, 
\begin{equation}\label{fconds}
  \begin{split}
    &f(0,\gamma,\kappa)=0, \qquad \quad ~f(1,\gamma,\kappa)=1, \\
&    \partial_{\sigma} f(0,\gamma,\kappa)=0, \qquad
   ~ \partial_{\sigma}f(1,\gamma,\kappa) =0,\\
    &\partial_{\sigma\sigma}f(0,\gamma,\kappa) >0, \qquad
    \partial_{\sigma\sigma}f(1,\gamma,\kappa) <0.
  \end{split}
\end{equation}
Remark that conditions~\eqref{fconds} guarantee that the eigenvalues
and the eigenvectors written above are well defined (can be extended)
when $\sigma=0$.
For each given $(\gamma,\kappa)$, there exists a unique
$\sigma^*\left(\gamma,\kappa\right)\in\left]0,1\right[$ such that 
\[ \partial_{\sigma\sigma}
  f(\sigma^*\left(\gamma,\kappa\right),\gamma,\kappa) =0.\]

A detailed analysis of the wave properties for this system is carried
out in~\cite{WS3x3}, with the following highlights:
\begin{itemize}
\item $k$~waves are the slowest with speed $0$. Both  $f$ and $c$
  are continuous across any $k$~wave;
\item $c$~waves travel with non negative speed. 
Both  $\frac{f}{s}$ and $k$ are continuous across any $c$ wave;
\item $s$~waves  travel with positive speed.  Both $c$ and $k$ are
  continuous 
  across any $s$ wave.
\end{itemize}

In \cite{WS3x3}, the global Riemann solver is constructed. Here we give a brief summary.
Let $(s_l,c_l , k_l)$ and $(s_r,c_r,k_r)$ be the left and right state of a Riemann problem, respectively. In general, the solution of the Riemann problem consists of a $k$ wave, a $c$ wave
and possibly some $s$ waves. They can be constructed as follows.
\begin{itemize}
\item Let $(s_m,c_l, k_r)$
  denote the right state of the $k$ wave. The value  $s_m$
is uniquely determined by the condition 
$$ f(s_m,c_l,k_r) = f(s_l,c_l,k_l).$$

\item For the remaining waves, we have $k\equiv k_r$ throughout. 
We then solve the Riemann problem for the $2\times 2$ sub-system 
\begin{equation}\label{eq2}
\partial_{t}s + \partial_{x}f(s,c,k_r) =0, \qquad \partial_{t}(cs) + \partial_{x}(cf(s,c,k_r)) =0
\end{equation}
with Riemann data 
$(s_m,c_l)$ and $(s_r,c_r)$ as the left and right states.
The solution consists of waves with non-negative speed.
\end{itemize}

We now give a precise definition of weak solution to the
Cauchy problem~\eqref{eq1}--\eqref{eq:init_data} and state the main theorem.

\begin{definition}
  \label{def:main}
  The vector-valued function
  $\left(s,c,k\right)\in\mathbf{L}^{\infty}\left(
    [0,+\infty)\times \mathbb{R},[0,1]^{3}\right)$ is a solution to the Cauchy
  problem~\eqref{eq1}--\eqref{eq:init_data} if for any
  $\phi\in\textbf{C}^{1}_{c}\left(
    [0,+\infty)\times\mathbb{R},\mathbb{R}\right)$ the following equalities hold
  \begin{align}
    \nonumber
    \int_{\Omega} \left[s\partial_{t}\phi+f(s,c,k)\partial_{x}\phi\right](t,x)\;
    dtdx
    +\int_{\mathbb{R}}\bar s (x)\phi\left(0,x\right)\; dx =0,\\\nonumber
    \int_{\Omega} \left[cs\partial_{t}\phi+cf(s,c,k)\partial_{x}\phi\right](t,x)\;
    dtdx
    +\int_{\mathbb{R}}\bar c\left(x\right)\bar s
    (x)\phi\left(0,x\right)\; dx=0,\\
    \nonumber
    k\left(t,x\right)=\bar k(x), \quad \forall (t,x)\in \Omega,
  \end{align}
  where $\Omega=\left]0,+\infty\right[\times \mathbb{R}$.
\end{definition}

\begin{theorem}
  \label{thm:main}
  If the initial data $\left(\bar s, \bar c,\bar k\right)$ satisfy
  \[
  \bar s\in\mathbf{L}^{\infty}\left(\mathbb{R},[0,1]\right), \qquad 
  \bar c\in\mathbf{BV}\left(\mathbb{R},[0,1]\right), \qquad 
  \bar k\in\mathbf{BV}\left(\mathbb{R},[0,1]\right),
  \]
  then there exists a solution to the Cauchy
  problem~\eqref{eq1}--\eqref{eq:init_data} in the sense of
  Definition~\ref{def:main}.
\end{theorem}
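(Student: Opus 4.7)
The plan is to combine an adapted wave-front tracking algorithm with a compensated compactness argument. Because $c$ and $k$ remain in $\mathbf{BV}$ they can be handled by Helly's theorem, while the genuinely nonlinear variable $s$ is controlled only in $\mathbf{L}^{\infty}$ and must be made strongly convergent by the Tartar--DiPerna machinery. First I would regularize the initial data to piecewise constant $(\bar s^{\nu},\bar c^{\nu},\bar k^{\nu})$ with finitely many jumps, such that $\bar s^{\nu}\to\bar s$ in $\mathbf{L}^{1}_{\mathrm{loc}}$ and $\mathrm{TV}(\bar c^{\nu})\le\mathrm{TV}(\bar c)$, $\mathrm{TV}(\bar k^{\nu})\le\mathrm{TV}(\bar k)$. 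Starting from this data I would build a front-tracking approximation $(s^{\nu},c^{\nu},k^{\nu})$ using the Riemann solver recalled after~\eqref{eq2}: a $k$-wave of zero speed, followed by the solution of the $2\times 2$ sub-system~\eqref{eq2} at fixed $k=k_{r}$. At each wave interaction I would apply either the accurate solver or a simplified one (lumping small packets of $s$-fronts into a single non-physical front of a fixed speed) to keep the total number of fronts locally finite up to any time $T$.

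The key uniform estimates are: the $\mathbf{L}^{\infty}$ bounds $s^{\nu},c^{\nu},k^{\nu}\in[0,1]$, which follow from the maximum principle for the scalar equation $\partial_{t}s+\partial_{x}f(s,c,k)=0$ at frozen $(c,k)$ and from the structure of the Riemann solver; the conservation $\mathrm{TV}(k^{\nu})=\mathrm{TV}(\bar k^{\nu})$, since $k$ does not evolve; and a bound on $\mathrm{TV}(c^{\nu})$ in terms of $\mathrm{TV}(\bar c^{\nu})$ and $\mathrm{TV}(\bar k^{\nu})$. The last estimate uses that $c$ jumps only across $c$- and $k$-fronts, that the Riemann solver sends the left $c$-value unchanged through the $k$-front (no new $c$-variation is created at $k$-fronts), and that linear degeneracy of the $c$-family prevents $c$-$c$ and $c$-$s$ interactions from increasing the total $c$-variation. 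By Helly's theorem these bounds furnish subsequences $c^{\nu}\to c$ and $k^{\nu}\to k$ strongly in $\mathbf{L}^{1}_{\mathrm{loc}}$.

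The main obstacle is upgrading the weak-$*$ convergence of $s^{\nu}$ to strong convergence by compensated compactness. For every smooth entropy--entropy flux pair $(\eta(\sigma),q(\sigma,\gamma,\kappa))$ of the parametric scalar law $\partial_{t}s+\partial_{x}f(s,c,k)=0$, I would show that the distribution $\partial_{t}\eta(s^{\nu})+\partial_{x}q(s^{\nu},c^{\nu},k^{\nu})$ decomposes into an entropy-production measure concentrated on the $s$-fronts (of uniformly bounded total mass by the standard Lax estimate) plus commutator terms concentrated on $c$- and $k$-fronts, whose total mass is controlled by $\mathrm{TV}(c^{\nu})+\mathrm{TV}(k^{\nu})$ and the $\mathbf{L}^{\infty}$ norms of $\partial_{\gamma}q,\partial_{\kappa}q$. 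These distributions are bounded measures and simultaneously bounded in $W^{-1,\infty}_{\mathrm{loc}}$, so Murat's lemma places them in a compact subset of $H^{-1}_{\mathrm{loc}}$. Since $\sigma\mapsto f(\sigma,\gamma,\kappa)$ has a single inflection point at $\sigma^{*}(\gamma,\kappa)$ and is genuinely nonlinear off $\sigma^{*}$, DiPerna's Young-measure reduction collapses the limit Young measure to a Dirac at each point, yielding $s^{\nu}\to s$ in $\mathbf{L}^{1}_{\mathrm{loc}}$.

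With all three unknowns converging strongly, the passage to the limit in the weak formulation of Definition~\ref{def:main} is routine. The hardest point in this program is the correct bookkeeping of the entropy production across the interactions of $s$-waves with $k$-fronts: the flux depends only Lipschitz-continuously on $\kappa$, the simplified solver can split an $s$-wave into several fronts across such an interaction, and $\mathrm{TV}(s^{\nu})$ cannot be hoped to be bounded. One must instead work with entropies whose dependence on the frozen $(c,k)$ is chosen so that the resulting commutator errors, integrated against $\mathrm{TV}(c^{\nu})+\mathrm{TV}(k^{\nu})$, remain uniformly bounded in $\nu$.
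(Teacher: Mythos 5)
Your overall strategy (front tracking for the adapted Riemann solver, Helly for $c,k$, compensated compactness for $s$) is the same as the paper's, but the two steps on which everything hinges are asserted rather than proved, and as stated they would fail. First, you claim the entropy production concentrated on the $s$-fronts has ``uniformly bounded total mass by the standard Lax estimate.'' Any Lax-type estimate bounds the dissipation of a \emph{single} front by a power of its strength; summing over all fronts requires control of $\sum_\ell|\Delta s_\ell|$, i.e.\ precisely the BV bound on $s$ that is unavailable for this degenerate system. The correct route is one-sided: for a \emph{convex} entropy $\eta(\sigma)$ the $s$-fronts contribute nonpositively to $\mu_\varepsilon=\partial_t\eta(s^\varepsilon)+\partial_x q^\varepsilon$, so one bounds only $\mu_\varepsilon^+$ directly; the full mass is then recovered from $|\mu_\varepsilon|(R)=2\mu_\varepsilon^+(R)-\mu_\varepsilon(R)$, where $\mu_\varepsilon(R)$ is bounded by the $\mathbf{L}^\infty$ bounds alone (boundary integrals over the rectangle), and non-convex entropies are handled by adding a large multiple of $\sigma^2$. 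Your proposal never makes this sign argument, and without it the claimed mass bound has no proof.

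Second, the ``commutator terms concentrated on $c$- and $k$-fronts'' cannot be controlled by $\mathrm{TV}(c)+\mathrm{TV}(k)$ times Lipschitz norms of $\partial_\gamma q,\partial_\kappa q$ in any routine way, because $s$ itself jumps across every $c$- and $k$-front by an amount that is neither small nor summable; a naive difference $q(s^+,c^+,k^+)-q(s^-,c^-,k^-)-\dot\xi\,\Delta\eta$ contains terms of order $|\Delta s|$ with no sign. The estimate $\Delta q^\varepsilon_\ell-\dot\xi_\ell\Delta\eta_\ell\le C(\varepsilon+|\Delta c_\ell|)$ at a $c$-front holds only because the minimum-jump Riemann solver enforces the admissibility conditions~\eqref{eq:entropy2}--\eqref{eq:entropy1}, which give $g\gtrless\lambda_c$ on the relevant $\sigma$-intervals and hence a sign for the integrals left after integration by parts; at a $k$-front one must use $\Delta f=0$ together with $\partial_\sigma f\ge 0$. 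These structural inputs are the heart of the proof and are absent from your outline. Two further remarks: non-physical fronts are unnecessary and harmful here (piecewise-linear interpolated fluxes on a finite $g$-grid keep the front count finite and preserve exact entropy inequalities), and the Young-measure reduction for a flux depending discontinuously on $(t,x)$ through $(c,k)$ cannot be quoted from DiPerna directly; it requires the frozen-coefficient entropies $\sigma\mapsto F(\tau,y,\sigma)$, the div--curl lemma, a Jensen-inequality functional exploiting the single inflection point, and a Lebesgue-point argument in $(\tau,y)$.
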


We emphasize the fact that $s=0$
is not excluded in our
theorem since we do not make use of Lagrangian coordinates which would
have required $s>0$. 
Indeed, under the hypotheses $s\ge s^{*}>0$, $k(t,x)=\text{const.}$,
system~\eqref{eq1} is equivalent to its
Lagrangian formulation~\cite{Wagner}:
\begin{equation}
  \label{eq:lagr}
  \begin{cases}
    \partial_{t}\left(\frac{1}{s}\right)-\partial_{y}
    \left(\frac{f\left(s,c,k\right)}{s}\right)
    &=0,\\
    \partial_{t}c &= 0,\\
    k &= \text{const.},
  \end{cases}
\end{equation}
where $y$ is the Lagrangian coordinate satisfying $\partial_{x}y=s$,
$\partial_{t}y = -f(s,c,k)$.
Therefore, under some additional hypotheses, the result in~\cite{BGS},
which holds for \emph{scalar} equations since it is based on the
maximum principle for Hamilton--Jacobi equation,
could be used to prove the existence of a unique vanishing viscosity solution to the
first equation in~\eqref{eq:lagr} and hence a (in some sense) unique
solution to system~\eqref{eq:lagr}. 
However, since we consider the case where $s$ can become 0, the
analysis in~\cite{BGS} cannot be applied. 
Instead, we need to solve the original
non triangular $2\times 2$ system in Eulerian coordinates.
Furthermore, since we consider rough permeability function $k$, 
the corresponding system in the Lagrangian coordinate is no longer triangular, 
\begin{displaymath}
  \begin{cases}
    \partial_{t}\left(\frac{1}{s}\right)-\partial_{y}
    \left(\frac{f\left(s,c,k\right)}{s}\right)
    &=0,\\
    \partial_{t}c &= 0,\\
    \partial_{t}\left(\frac{k}{s}\right)-\partial_{y}\left(\frac{kf\left(s,c,k\right)}
      {s}\right)&=0.
  \end{cases}
\end{displaymath}
Remark that the Lagrangian coordinates introduced
in~\cite[Section~6]{BGS} for \eqref{eq1} 
make the system triangular, but still require $s\ge s^{*}>0$ and moreover
they mix time and space, therefore the Cauchy problem for~\eqref{eq1}
in Eulerian
coordinates is not equivalent to the Cauchy problem in the
coordinates introduced in~\cite[Section~6]{BGS}.

In this paper, the proof for the existence of solution
is carried out by showing that wave front tracking approximate
solutions are compact by a compensated compactness argument (see for
instance~\cite{KarRasTad} for an application of compensated
compactness to a $2\times2$ bi--dimensional related model).

The remaining of the paper is organized as follows. 
In Section~\ref{sec:FT} the wave front tracking approximate solutions are
constructed. In Section~\ref{sec:entropy}  we prove the necessary entropy
estimates. Finally in Section~\ref{sec:conv} the
compensated compactness argument is carried out to prove
Theorem~\ref{thm:main}.

\section{Front Tracking Approximations}
\label{sec:FT}
In this section we modify the algorithm
constructed in \cite{CocliteRisebro} and \cite{WSCauchy} to adapt it to
system~\eqref{eq1}. We define the functions (see Figure~\ref{fig:fg}):
\begin{equation}
  \label{eq:sig_trans}
  g\left(\sigma,\gamma,\kappa\right)=\frac{f\left(\sigma,\gamma,\kappa\right)}{\sigma},
  \qquad
  P\left(\sigma,\gamma,\kappa\right)=\int_{0}^{\sigma}\left|\partial_{\xi}g
    \left(\xi,\gamma,\kappa\right)\right|d\xi.
\end{equation}
Since at $\sigma=0$ both $f$ and its derivative $\partial_\sigma f$ vanish, we define
$g(0,\gamma,\kappa)=0$. Hypotheses~\eqref{fconds} imply that
$\partial_{\sigma}g\left(0,\gamma,\kappa\right)>0$ and that the function
$\sigma\mapsto
g\left(\sigma,\gamma,\kappa\right)$ has one single maximum point somewhere
between the single inflexion point of $f$ and the point $\sigma=1$.
The function $P$ is continuous with respect to its three variables and
strictly increasing and invertible with respect to the variable
$\sigma$. 
\begin{figure}
  \begin{center}
    \begin{tikzpicture}[line cap=round,line join=round,x=4cm,y=3cm,
      declare function={ f(\s,\c,\k) = \s*\s / (\s*\s + (1 - \k/4*(\c
        - 0.5)*(\c - 0.5))*(1-\s)*(1-\s)); g(\s,\c,\k) = \s / (\s*\s +
        (1 - \k/4*(\c - 0.5)*(\c - 0.5))*(1-\s)*(1-\s)); P(\s,\c,\k) =
        \s / (\s*\s + (1 - \k/4*(\c - 0.5)*(\c - 0.5))*(1-\s)*(1-\s));
      },]
      \draw [line width=0.5pt,->] (-0.1,0)-- (1.2,0); \draw [line
      width=0.5pt,->] (0.0,-0.1)-- (0,1.3); \draw (1.2,0) node
      [label=below:{$\sigma$}] {}; \draw (1.,1) node
      [label=right:{$\left(1,1\right)$}] {}; \draw (1.,0) node
      [label=below:{$\left(1,0\right)$}] {}; \draw (0.,0) node
      [label=below left:{$\left(0,0\right)$}] {}; \draw (1.,1) node
      [label=below left:{$\textcolor{red}{f}$}] {}; \draw (1.,1.1)
      node [label=above left:{$\textcolor{blue}{g}$}] {};
      \draw[color=red,line width=2] plot[domain=0:1,samples=100]
      (\x,{f(\x,1,1)}); \draw[color=blue,line width=2]
      plot[domain=0:1,samples=100] (\x,{g(\x,1,1)}); \draw (0,0) node
      [fill,circle,inner sep=2pt] {}; \draw (1,0) node
      [fill,circle,inner sep=2pt] {}; \draw (1,1) node
      [fill,circle,inner sep=2pt] {};
    \end{tikzpicture}
  \end{center}
  \caption{Diagrams of the flux $\sigma\mapsto
    f\left(\sigma,\gamma,\kappa\right)$ and of the function
    $\sigma\mapsto g\left(\sigma,\gamma,\kappa\right)=
  f\left(\sigma,\gamma,\kappa\right)/\sigma$ for fixed values of
  $\gamma$ and $\kappa$.}
\label{fig:fg}
\end{figure}
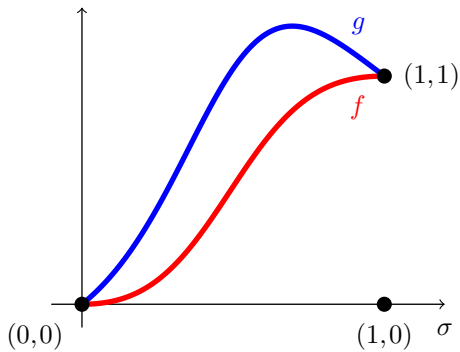
Fixing initial data $\left(\bar s, \bar c, \bar k\right)$ satisfying the
hypotheses of Theorem~\ref{thm:main} and
fixing an approximation parameter $\varepsilon>0$, we can construct piecewise
constant approximate initial data $\left(\bar s^{\varepsilon},\bar
  c^{\varepsilon},
  \bar k^{\varepsilon}\right)$ with values in $\left[0,1\right]^{3}$ such that:
\begin{align}
  \nonumber
    \left\|\bar
      k^{\varepsilon} - \bar
  k\right\|_{\mathbf{L}^{\infty}}<\varepsilon,&&
  \tv \bar k^{\varepsilon} \le \tv \bar k,
    \\
  \nonumber
    \left\|\bar
      c^{\varepsilon} - \bar
       c\right\|_{\mathbf{L}^{\infty}}<\varepsilon,&&\tv \bar c^{\varepsilon}\le \tv \bar c ,
    \\
  \left\|\bar s^{\varepsilon}-\bar
  s\right\|_{\mathbf{L}^{1}\left(\left(-\frac{1}{\varepsilon},\frac{1}{\varepsilon}\right),
  \mathbb{R}\right)}\le \varepsilon,&&
\end{align}
Let $\bar x_{1},\ldots,\bar x_{N}$ be the set of points in which
$\bar
k^{\varepsilon}$ has
jumps such that
\begin{displaymath}
  \bar k^{\varepsilon}\left(x\right)=
  k_{0}\chi_{]-\infty,\bar x_{1}]} +
  \sum_{i=1}^{N-1}k_{i}\chi_{]\bar
    x_{i},\bar x_{i+1}]}(x)
  +k_{N}\chi_{]\bar x_{N},+\infty[},
\end{displaymath}
and let $\bar y_{1},\ldots,\bar y_{M}$ be the set of points in which
$\bar c^{\varepsilon}$ has
jumps such that
\begin{displaymath}
  \bar c^{\varepsilon}\left(x\right)=
  c_{0}\chi_{]-\infty,\bar y_{1}]} + \sum_{j=1}^{M-1}c_{j}\chi_{]\bar
    y_{j},\bar y_{j+1}]}(x)
  +c_{M}\chi_{]\bar y_{M},+\infty[}.
\end{displaymath}
Without loss of generality, we can suppose that no $\bar y_{j}$ coincides with any
$\bar x_{i}$.
Define the constant
\begin{displaymath}
L=\left\lceil
  \frac{1}{\varepsilon}
  \sup_{\gamma,\kappa}\left\|f\left(\cdot,\gamma,\kappa\right)\right\|_{\mathbf{C}^{2}}
\right\rceil\cdot\left(N+M\right),
\end{displaymath}
where $\lceil \alpha \rceil$ denotes the least integer greater than or equal
to the real number $\alpha$. In the following we denote by $\land$ the
logical operator \textbf{and}. 
We consider the following finite sets of possible values for the
function $g$:
\begin{align*}
    \mathcal{G}_{0}^{1}&=\left\{g\mid g=g\left(\bar s^{\varepsilon}(x),\bar
        c^{\varepsilon}(x), \bar k^{\varepsilon}(x)\right),\quad
      x\in\mathbb{R}\right\},\\
    \mathcal{G}_{0}^{2}&=\left\{g\mid
                           g=g\left(\frac{\ell}{L},c_{j},k_{i}\right),\quad
                           i=0,\ldots,N,\;
                           j=0,\ldots,M,\;
                           \ell=0,\ldots,L\right\},\\
   \mathcal{G}_{0}^{3}&=\left\{g\mid g=\max_{0\le \sigma \le
        1}g\left(\sigma,c_{j},k_{i}\right),\quad
      i=0,\ldots,N,\;j=0,\ldots,M\right\},\\
    \mathcal{G}_{0}^{4}&=\Big\{g\mid
      g=g\left(\sigma,c_{j},k_{i}\right)=g\left(\sigma,c_{j^{*}},k_{i^{*}}\right)
      \land g_{s}\left(\sigma,c_{j},k_{i}\right)\cdot
                         g_{s}\left(\sigma,c_{j^{*}},k_{i^{*}}\right)<
                     0,\\
                       &\qquad\qquad \text{
                         for some }
                         i,i^{*}=0,\ldots,N,\;
                           j,j^{*}=0,\ldots,M,\;\sigma\in[0,1]
\Big\},\\
    \mathcal{G}_{0}&=\mathcal{G}_{0}^{1}
                       \cup \mathcal{G}_{0}^{2}\cup\mathcal{G}_{0}^{3}
                       \cup\mathcal{G}_{0}^{4}.
\end{align*}
\begin{remark}~
  \begin{itemize}
  \item
    The set $\mathcal{G}_{0}^{1}$ includes all the possible initial values for $g$;
  \item
    The set $\mathcal{G}_{0}^{2}$ includes a sufficiently fine grid for
    $g$ in order that any $s$ grid that contains all the counter images of
    $\mathcal{G}_{0}^{2}$ through $g\left(\cdot,c_{j},k_{i}\right)$,
    for any fixed $j,i$, is
    finer than $\frac{1}{L}$;
  \item
    The set $\mathcal{G}_{0}^{3}$ includes all the possible maxima of
    $g\left(\cdot,c_{j},k_{i}\right)$ for any $j,i$;
  \item
    The set $\mathcal{G}_{0}^{4}$ includes all the possible
    values of $g$ where two graphs of functions of type 
    $g\left(\cdot,c_{j},k_{i}\right)$ intersect with derivatives of different
    sign. Because of the shape of $g$, this
    set too is finite.  
  \end{itemize}
\end{remark}

We start the front tracking algorithm from the region $x<\bar
x_{1}$. For this purpose
we define the allowed values for $s$ in
that region:
\begin{displaymath}
  \mathcal{S}_{0,j}=\left\{\sigma\mid g\left(\sigma,c_{j},k_{0}\right)\in
    \mathcal{G}_{0}\right\}.
\end{displaymath}
We  call $f^{0,j}(\sigma)$ the
linear interpolation of the map
$\sigma\mapsto f\left(\sigma,c_{j},k_{0}\right)$ according to the
points in $\mathcal{S}_{0,j}$.
Observe that, since we have included $\mathcal{G}_{0}^{2}$ in
$\mathcal{G}_{0}$, the set $\left\{\frac{\ell}{L}\right\}_{\ell=0}^{L}$ is
included in $\mathcal{S}_{0,j}$ for every $j$, hence we have the
uniform estimate
\begin{displaymath}
  \begin{cases}
    \left|f\left(\sigma,c_{j},k_{0}\right)-f^{0,j}(\sigma)\right|\le
    \varepsilon,\\
    \left|\partial_{\sigma}f\left(\sigma,c_{j},k_{0}\right)-\partial_{\sigma}f^{0,j}(\sigma)\right|\le
    \frac{\varepsilon}{N+M} ,   
  \end{cases}
  \quad\text{ for all }\sigma\in\left[0,1\right],\ j=0,\ldots,M.
\end{displaymath}
\begin{figure}
\begin{tikzpicture}[line cap=round,line join=round,>=triangle 45,x=0.6cm,y=0.6cm]
\draw (-9,4) node [label=below:{$\Omega_{0,0}$}] {};
\draw (-7,2) node [label=below:{$\Omega_{0,1}$}] {};
\draw (-5.,1.5) node [label=below:{$\Omega_{1,2}$}] {};
\draw (-5.,8) node [label=below:{$\Omega_{1,4}$}] {};
\draw (1.,4) node [label=below:{$\Omega_{3,3}$}] {};
\draw (4.5,6) node [label=below:{$\Omega_{4,3}$}] {};
\draw [line width=2.pt,color=blue] (-6.,0.)-- (-6.,9.);
\draw (-6,0) node [label=below:{$\bar x_{1}$}] {};
\draw [line width=2.pt,color=blue] (-3.,0.)-- (-3.,9.);
\draw (-3,0) node [label=below:{$\bar x_{2}$}] {};
\draw [line width=2.pt,color=blue] (-1.,9.)-- (-1.,0.);
\draw (-1,0) node [label=below:{$\bar x_{3}$}] {};
\draw [line width=2.pt,color=blue] (3.,9.)-- (3.,0.);
\draw (3.,0) node [label=below:{$\bar x_{4}$}] {};
\draw [line width=2.pt,color=blue] (6.,9.)-- (6.,0.);
\draw (6.,0) node [label=below:{$\bar x_{5}$}] {};
\draw [line width=2.pt,color=green] (-7.34,0.)-- (-6.,1.52)-- (-3.,4.)-- (-1.,6.)-- (3.,7.78)-- (6.06,9.72);
\draw (-7.34,0) node [label=below:{$\bar y_{2}$}] {};
\draw [line width=2.pt,color=green] (-5.,0.)-- (-3.,2.26)-- (-1.,3.2)-- (1.26,4.94)-- (3.,5.62)-- (6.,7.26)-- (8.72,8.86);
\draw (-5,0) node [label=below:{$\bar y_{3}$}] {};
\draw [line width=2.pt,color=green] (-9.74,0.)-- (-6.,3.28)-- (-3.,6.)-- (-1.,7.7)-- (3.,9.3);
\draw (-9.74,0) node [label=below:{$\bar y_{1}$}] {};
\draw [line width=2.pt,color=green] (-2.22,0.)-- (-1.,1.)-- (3.,3.82)-- (6.,5.62)-- (8.84,7.32);
\draw (-2.22,0) node [label=below:{$\bar y_{4}$}] {};
\draw [line width=2.pt,color=green] (0.86,0.)-- (3.,2.)-- (4.54,2.8)-- (6.,4.26)-- (8.86,5.36);
\draw (0.86,0) node [label=below:{$\bar y_{5}$}] {};
\draw [line width=2.pt,color=green] (2.44,0.)-- (3.,0.4)-- (6.,2.5)-- (9.,4.);
\draw [line width=2.pt,color=green] (4.64,0.)-- (6.,1.)-- (9.,2.32);
\draw (4.64,0) node [label=below:{$\bar y_{7}$}] {};
\draw [->,line width=2.pt] (-10.,0.) -- (9,0.);
\draw [line width=1.pt,color=red] (-10,0.)-- (-4.92,6.14)-- (-3.,7.12)-- (-1.,8.12)-- (1.4206896551724135,8.668275862068967)-- (2.88,9.86);
\draw [line width=1.pt,color=red] (-8.5,0.)-- (-7.693201648751313,1.7950530994908267)-- (-6.,3.96)-- (-4.92,6.14)-- (-3.,7.12)-- (-1.,8.12)-- (1.4206896551724135,8.668275862068967)-- (2.88,9.86);
\draw [line width=1.pt,color=red] (-7.693201648751313,1.7950530994908267)-- (-6.,2.4)-- (-4.000844862181856,3.1726349139296657)-- (-3.,4.78)-- (-1.,6.82)-- (0.3034482758620687,8.221379310344828)-- (0.7498883001824026,8.516342221836187);
\draw [line width=1.pt,color=red] (-4.000844862181856,3.1726349139296657)-- (-3.,3.56)-- (-1.,4.76)-- (1.8,6.24)-- (3.,6.92)-- (6.,8.84)-- (7.56,9.78);
\draw [line width=1.pt,color=red] (-4.46,0.)-- (-3.,1.)-- (-1.,2.22)-- (1.26,4.94)-- (3.,6.06)-- (4.27128263337117,7.733620885357547);
\draw [line width=1.pt,color=red] (1.26,4.94)-- (3.,5.2)-- (5.096632904462085,6.76615932110594)-- (6.,7.76)-- (7.22,8.82);
\draw [line width=1.pt,color=red] (5.096632904462085,6.76615932110594)-- (6.,6.96)-- (7.62,7.72);
\draw [line width=1.pt,color=red] (1.86,0.)-- (3.,1.1)-- (4.54,2.8)-- (6.,3.62)-- (8.78,4.72);
\draw [line width=1.pt,color=red] (4.54,2.8)-- (6.,4.88)-- (8.76,5.74);
\draw [line width=1.pt,color=red] (-1.52,0.)-- (-1.,0.56)-- (1.299293598971293,2.6210019872747616)-- (3.,4.36)-- (3.9326190698602153,5.896655120786378);
\end{tikzpicture}
\caption{Wave front tracking pattern. $k$ waves in blue, $c$ waves
  in green and $s$ waves in red}
\end{figure}
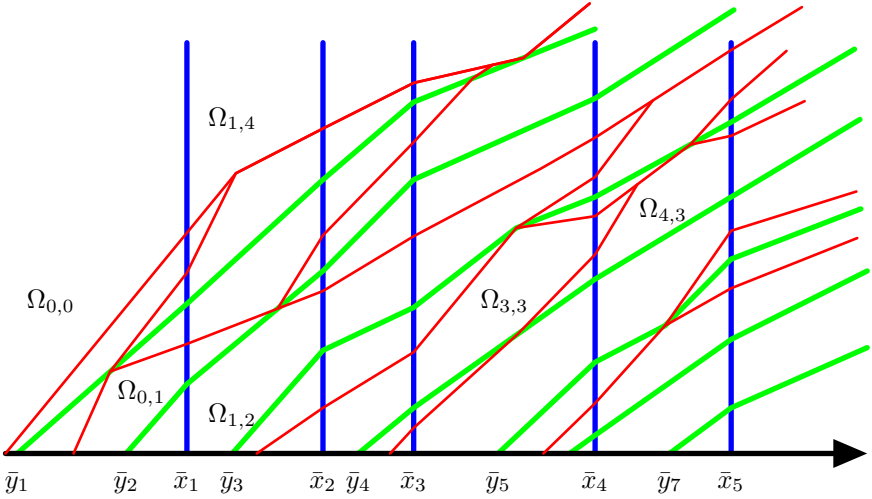

We solve all the Riemann problems in $x<\bar x_{1}$ at $t=0$ in the
following way. Let $\bar x \in\left]\bar y_{j},\bar y_{j+1}\right[$
($\bar y_{0}=-\infty$) be a jump in $\bar s^{\varepsilon}$. Here we take the
entropic solution to the Riemann problem
\begin{displaymath}
  \partial_{t}s + \partial_{x}f^{0,j}\left(s\right)=0,\qquad
  s\left(0,x\right)=
  \begin{cases}
    \bar s^{\varepsilon}\left(\bar x-\right)&\text{ for }x<\bar x,\\
    \bar s^{\varepsilon}\left(\bar x+\right)&\text{ for }x>\bar x.
  \end{cases}
\end{displaymath}
Since $f^{0,j}$ is piecewise linear, the solution to the Riemann
problem is piecewise constant and takes values in the set
$\mathcal{S}_{0,j}$ of the kink points of 
$f^{0,j}$, moreover the entropy condition
in~\cite[Theorem~4.4]{Bbook} is satisfied.  The same Riemann solver is
used whenever at $t>0$ two $s$~waves interact in some region
$\Omega_{0,j}$ defined below.

At the
points $\bar y_{j}$, we solve the Riemann problem
according to the minimum jump
condition
described in~\cite{WSCauchy} (see
also~\cite{GimseRisebro}) that we briefly outline (see Fig~\ref{fig:fgRP}).
\begin{figure}
  \begin{center}
    \begin{tikzpicture}[line cap=round,line join=round,x=3cm,y=3cm,
      declare function={
        g(\s,\c,\k) = \s / (\s*\s + (1 - \k/4*(\c - 0.5)*(\c -
        0.5))*(1-\s)*(1-\s)); 
      },]
      \draw [line width=0.5pt,->] (-0.1,0)-- (1.1,0);
      \draw [line width=0.5pt,->] (0.0,-0.1)-- (0,1.3);
      \draw (1.1,0) node [label=below:{$\sigma$}] {};
      \draw[color=black,line width=1]
      plot[domain=0:1,samples=100] (\x,{g(\x,1,1)});
      \draw (0,0) node [fill,circle,inner sep=1pt] {};
      \draw (0.3,0) node [label=below:{$s^{L}=s^{-}$}] {};
      \draw (0.3,0) node [fill,circle,inner sep=1pt] {};
      \draw [color=black,dashed,line width=0.5] (0.3,0)--(0.3,{g(0.3,1,1)});
      \draw [color=blue,line width=2] (0.0,{g(0.3,1,1)})--(1,{g(0.3,1,1)});
      \draw (1,1) node [fill,circle,inner sep=1pt] {};
      \draw (0.,{g(0.3,1,1)}) node [label=left:{$\textcolor{green}{\gamma}$}] {};
    \end{tikzpicture}
    \begin{tikzpicture}[line cap=round,line join=round,x=3cm,y=3cm,
      declare function={
        g(\s,\c,\k) = \s / (\s*\s + (1 - \k/4*(\c - 0.5)*(\c -
        0.5))*(1-\s)*(1-\s)); 
      },]
      \draw [line width=0.5pt,->] (-0.1,0)-- (1.1,0);
      \draw [line width=0.5pt,->] (0.0,-0.1)-- (0,1.3);
      \draw (1.1,0) node [label=below:{$\sigma$}] {};
      \draw[color=black,line width=1]
      plot[domain=0:1,samples=100] (\x,{g(\x,1,1)});
      \draw (0,0) node [fill,circle,inner sep=1pt] {};
      \draw (0.6,0) node [label=below:{$s^{L}$}] {};
      \draw (0.6,0) node [fill,circle,inner sep=1pt] {};
      \draw [color=black,dashed,line width=0.5] (0.6,0)--(0.6,{g(0.6,1,1)});
      \draw [color=blue,line width=2] (0.0,{g(0.6,1,1)})--(0.8,{g(0.6,1,1)});
      \draw[color=blue,line width=2]
      plot[domain=0.8:1,samples=100] (\x,{g(\x,1,1)});
      \draw (1,1) node [fill,circle,inner sep=1pt] {};
      \draw (0.9,0) node [fill,circle,inner sep=1pt] {};
      \draw [color=black,dashed,line width=0.5] (0.9,0)--(0.9,{g(0.9,1,1)});
      \draw (0.9,0) node [label=below:{$s^{-}$}] {};
      \draw [color=green,line width=0.5] (0.0,{g(0.9,1,1)})--(1.0,{g(0.9,1,1)});
      \draw (0.,{g(0.9,1,1)}) node [label=left:{$\textcolor{green}{\gamma}$}] {};
    \end{tikzpicture}
    \begin{tikzpicture}[line cap=round,line join=round,x=3cm,y=3cm,
      declare function={
        g(\s,\c,\k) = \s / (\s*\s + (1 - \k/4*(\c - 0.5)*(\c -
        0.5))*(1-\s)*(1-\s)); 
      },]
      \draw [line width=0.5pt,->] (-0.1,0)-- (1.1,0);
      \draw [line width=0.5pt,->] (0.0,-0.1)-- (0,1.3);
      \draw (1.1,0) node [label=below:{$\sigma$}] {};
      \draw[color=black,line width=1]
      plot[domain=0:1,samples=100] (\x,{g(\x,1,1)});
      \draw (0,0) node [fill,circle,inner sep=1pt] {};
      \draw (0.7,0) node [label=below:{$s^{-}$}] {};
      \draw (0.7,0) node [fill,circle,inner sep=1pt] {};
      \draw [color=black,dashed,line width=0.5] (0.7,0)--(0.7,{g(0.7,1,1)});
      \draw [color=blue,line width=2] (0.0,{g(0.7,1,1)})--(0.7,{g(0.7,1,1)});
      \draw[color=blue,line width=2]
      plot[domain=0.7:1,samples=100] (\x,{g(\x,1,1)});
      \draw (1,1) node [fill,circle,inner sep=1pt] {};
      \draw (0.9,0) node [fill,circle,inner sep=1pt] {};
      \draw [color=black,dashed,line width=0.5] (0.9,0)--(0.9,{g(0.9,1,1)});
      \draw (0.9,0) node [label=below:{$s^{L}$}] {};
      \draw [color=green,line width=0.5] (0.0,{g(0.7,1,1)})--(1.0,{g(0.7,1,1)});
      \draw (0.,{g(0.7,1,1)}) node [label=left:{$\textcolor{green}{\gamma}$}] {};
    \end{tikzpicture}
    \begin{tikzpicture}[line cap=round,line join=round,x=3cm,y=3cm,
      declare function={
        g(\s,\c,\k) = \s / (\s*\s + (1 - \k/4*(\c - 0.5)*(\c -
        0.5))*(1-\s)*(1-\s)); 
      },]
      \draw [line width=0.5pt,->] (-0.1,0)-- (1.1,0);
      \draw [line width=0.5pt,->] (0.0,-0.1)-- (0,1.3);
      \draw (1.1,0) node [label=below:{$\sigma$}] {};
      \draw[color=black,line width=1]
      plot[domain=0:1,samples=100] (\x,{g(\x,1,1)});
      \draw[color=red,line width=2]
      plot[domain=0.:0.7,samples=100] (\x,{g(\x,1,1)});
      \draw (0,0) node [fill,circle,inner sep=1pt] {};
      \draw (0.3,0) node [label=below:{$s^{R}\ $}] {};
      \draw (0.3,0) node [fill,circle,inner sep=1pt] {};
      \draw (0.4,0) node [label=below:{$\ s^{+}$}] {};
      \draw (0.4,0) node [fill,circle,inner sep=1pt] {};
      \draw [color=black,dashed,line width=0.5] (0.3,0)--(0.3,{g(0.3,1,1)});
      \draw [color=black,dashed,line width=0.5] (0.4,0)--(0.4,{g(0.4,1,1)});
      \draw [color=red,line width=2] (0.7,{g(0.7,1,1)})--(1,{g(0.7,1,1)});
      \draw [color=green,line width=0.5] (0.0,{g(0.4,1,1)})--(1.0,{g(0.4,1,1)});
      \draw (1,1) node [fill,circle,inner sep=1pt] {};
      \draw (0.,{g(0.4,1,1)}) node [label=left:{$\textcolor{green}{\gamma}$}] {};
    \end{tikzpicture}
    \begin{tikzpicture}[line cap=round,line join=round,x=3cm,y=3cm,
      declare function={
        g(\s,\c,\k) = \s / (\s*\s + (1 - \k/4*(\c - 0.5)*(\c -
        0.5))*(1-\s)*(1-\s)); 
      },]
      \draw [line width=0.5pt,->] (-0.1,0)-- (1.1,0);
      \draw [line width=0.5pt,->] (0.0,-0.1)-- (0,1.3);
      \draw (1.1,0) node [label=below:{$\sigma$}] {};
      \draw[color=black,line width=1]
      plot[domain=0:1,samples=100] (\x,{g(\x,1,1)});
      \draw (0,0) node [fill,circle,inner sep=1pt] {};
      \draw (0.4,0) node [label=below:{$s^{+}$}] {};
      \draw (0.4,0) node [fill,circle,inner sep=1pt] {};
      \draw [color=black,dashed,line width=0.5] (0.4,0)--(0.4,{g(0.4,1,1)});
      \draw[color=red,line width=2]
      plot[domain=0.0:0.54,samples=100] (\x,{g(\x,1,1)});
      \draw (1,1) node [fill,circle,inner sep=1pt] {};
      \draw (0.9,0) node [fill,circle,inner sep=1pt] {};
      \draw [color=black,dashed,line width=0.5] (0.9,0)--(0.9,{g(0.9,1,1)});
      \draw (0.9,0) node [label=below:{$s^{R}$}] {};
      \draw [color=red,line width=2] (0.55,{g(0.9,1,1)})--(1.0,{g(0.9,1,1)});
      \draw (0.,{g(0.4,1,1)}) node [label=left:{$\textcolor{green}{\gamma}$}] {};
      \draw [color=green,line width=0.5] (0.0,{g(0.4,1,1)})--(1.0,{g(0.4,1,1)});
    \end{tikzpicture}
  \end{center}
  \caption{The graphs of $G^{L}$ and $G^{R}$ are drawn respectively in
    blue and red. For each graph, a possible transition level $\gamma$ (the level at
    which given $G^{L}$ and $G^{R}$ intersect) is drawn in green.}
\label{fig:fgRP}
\end{figure}
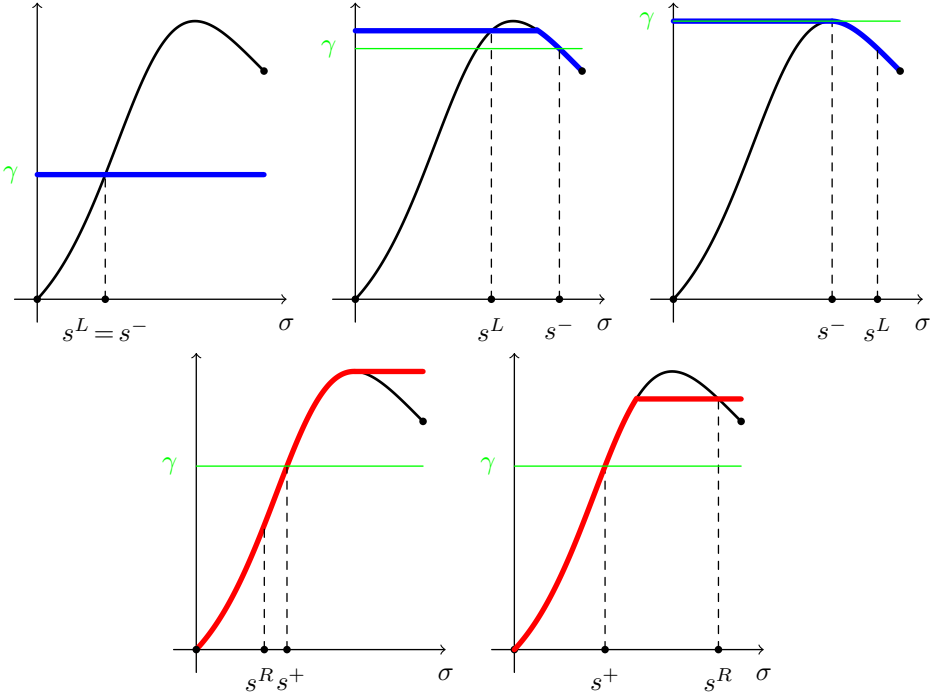
Define
\begin{displaymath}
  s^{L}=\bar s^{\varepsilon}\left(\bar y_{j}-\right),\quad
  c^{L}=\bar c^{\varepsilon}\left(\bar y_{j}-\right)=c_{j-1},\quad
  s^{R}=\bar s^{\varepsilon}\left(\bar y_{j}+\right),\quad
  c^{R}=\bar c^{\varepsilon}\left(\bar y_{j}+\right)=c_{j},
\end{displaymath}
and the two auxiliary monotone functions (the first one non increasing and
the second one non decreasing)
\begin{displaymath}
G^{L}\left(\sigma\right)=
\begin{cases}
  \max \left\{g\left(\varsigma,c^{L},k_{0}\right)\mid
    \varsigma\in\left[\sigma,s^{L}\right]\right\},&\text{ for
  }\sigma\le s^{L},\\[10pt]
  \min \left\{g\left(\varsigma,c^{L},k_{0}\right)\mid
    \varsigma\in\left[s^{L},\sigma\right]\right\},&\text{ for
  }\sigma\ge s^{L},
\end{cases}
\end{displaymath}
\begin{displaymath}
G^{R}\left(\sigma\right)=
\begin{cases}
  \min \left\{g\left(\varsigma,c^{R},k_{0}\right)\mid
    \varsigma\in\left[\sigma,s^{R}\right]\right\},&\text{ for
  }\sigma\le s^{R},\\[10pt]
  \max \left\{g\left(\varsigma,c^{R},k_{0}\right)\mid
    \varsigma\in\left[s^{R},\sigma\right]\right\},&\text{ for
  }\sigma\ge s^{R}.
\end{cases}
\end{displaymath}
Call $\gamma$ the unique level at which $G^{L}$ and
$G^{R}$ intersect. Because of the hypotheses on $g$, $\gamma$ is equal
to either $g\left(s^{L},c^{L},k_{0}\right)$,
$g\left(s^{R},c^{R},k_{0}\right)$, a maximum of  either
$g\left(\cdot,c^{L},k_{0}\right)$ or
$g\left(\cdot,c^{R},k_{0}\right)$, or a point in which these two
function intersect with derivatives having opposite sign.
In any case  $\gamma\in\mathcal{G}_{0}$ holds.
Define the closed intervals
\begin{displaymath}
  I^{L} =\left[G^{L}\right]^{-1}\left(\left\{\gamma\right\}\right),\qquad
  I^{R}=\left[G^{R}\right]^{-1}\left(\left\{\gamma\right\}\right).
\end{displaymath}
Finally call $s^{-}$ and $s^{+}$ respectively the unique projections of 
$s^{L}$ and $s^{R}$ on the closed strictly convex sets $I^{L}$ and $I^{R}$.
It is not difficult to show that
\begin{displaymath}
  \begin{array}{ll}
    \gamma=G^{L}\left(s^{-}\right)=g\left(s^{-},c^{L},k_{0}\right),\quad&s^{L},s^{-}\in \mathcal{S}_{0,j-1},\\
    \gamma=G^{R}\left(s^{+}\right)= g\left(s^{+},c^{R},k_{0}\right),
    \quad &s^{R},\;s^{+}\in \mathcal{S}_{0,j}.
  \end{array}
\end{displaymath}
Take any wave, with left and right
states $s_{l},s_{r}\in \mathcal{S}_{0,j-1}$, of the entropic
solution to the Riemann problem
\begin{equation}
  \label{eq:j-1}
  \partial_{t}s + \partial_{x}f^{0,j-1}\left(s\right)=0,\qquad
  s\left(0,x\right)=
  \begin{cases}
    s^{L}&\text{ for }x<\bar y_{j},\\
    s^{-}&\text{ for }x>\bar y_{j},
  \end{cases}
\end{equation}
and suppose $s^{L}< s^{-}$. Then,
$s^{L}\le s_{l}<s_{r}\le s^{-}$ and,
because of the entropy condition~\cite[Theorem~4.4]{Bbook}, 
its speed satisfies ($f^{0,j-1}$ coincides with
$f\left(\cdot,c^{L},k_{0}\right)$ on $\mathcal{S}_{0,j-1}$):
\begin{displaymath}
  \begin{split}
    \lambda_{s}&=
    \frac{f\left(s_{r},c^{L},k_{0}\right)-f\left(s_{l},c^{L},k_{0}\right)}{s_{r}-s_{l}}
    \le
    \frac{f\left(s^{-},c^{L},k_{0}\right)-f\left(s_{l},c^{L},k_{0}\right)}{s^{-}-s_{l}}\\
    &\quad = g\left(s^{-},c^{L},k_{0}\right) +
    s_{l}\frac{g\left(s^{-},c^{L},k_{0}\right)-g\left(s_{l},c^{L},k_{0}\right)}{s^{-}-s_{l}}\\
    &\quad = \lambda_{c} +
    s_{l}\frac{G^{L}\left(s^{-}\right)-g\left(s_{l},c^{L},k_{0}\right)}{s^{-}-s_{l}}\\
    &\quad \le \lambda_{c},
  \end{split}
\end{displaymath}
with
$\lambda_{c}=\gamma=
g\left(s^{-},c^{L},k_{0}\right)=g\left(s^{+},c^{R},k_{0}\right)$
and where we used the definition of
$G^{L}$. If instead $s^{-}<s^{L}$, then $s^{-}\le s_{r}<s_{l}\le
s^{L}$ and, as in the previous computation, we have 
\begin{displaymath}
    \lambda_{s}\le  \lambda_{c} +
    s_{l}\frac{G^{L}\left(s^{-}\right)-g\left(s_{l},c^{L},k_{0}\right)}{s^{-}-s_{l}}
    \le \lambda_{c},
\end{displaymath}
Therefore, in any case, the solution to the Riemann problem~\eqref{eq:j-1}
can be patched with a $c$ wave that travel with speed $\lambda_{c}$
and connect the left state $\left(s^{-},c^{L},k_{0}\right)$ to the
right state $\left(s^{+},c^{R},k_{0}\right)$. Similar computations
can be done at
the right of the $c$ wave so that
the complete solution includes a $c$ wave travelling with speed
$\lambda_{c}$, possibly together some entropic $s$ waves to its left
(solutions to $\partial_{t}s+\partial_{x}f^{0,j-1}\left(s\right)=0$) and some entropic
$s$ waves to its right (solutions to
$\partial_{t}s+\partial_{x}f^{0,j}\left(s\right)=0$). 
We also use this Riemann solver whenever, for $t>0$,
a $c$ wave interact with one or more $s$ waves.

We point out the following properties of this Riemann solver that will
be needed in the proof of the main theorem.
\begin{itemize}
\item
  The $c$ wave satisfies Rankine-Hugoniot
  \begin{displaymath}
    \begin{cases}
      f\left(s^{+},c^{R},k_{0}\right)-f\left(s^{-},c^{L},k_{0}\right)=\lambda_{c}
      \left(s^{+}-s^{-}\right),\\
      c^{R}f\left(s^{+},c^{R},k_{0}\right)-c^{L}f\left(s^{-},c^{L},k_{0}\right)
      =\lambda_{c}
      \left(c^{R}s^{+}-c^{L}s^{-}\right).\\
    \end{cases}
  \end{displaymath}
\item
  The $c$ wave is an ``admissible'' path as defined in~\cite{WSCauchy}
  and satisfies the following entropy condition:
  \begin{itemize}
  \item if $s^{-}<s^{+}$ there exists
    $s^{*}\in\left[s^{-},s^{+}\right]$ such that
    \begin{equation}
      \label{eq:entropy2}
      \begin{cases}
        g\left(\sigma,c^{L},k_{0}\right)\ge \lambda_{c}
        &\text{ for all }\sigma\in\left[s^{-},s^{*}\right],\\
        g\left(\sigma,c^{R},k_{0}\right)\ge \lambda_{c}
        &\text{ for all }\sigma\in\left[s^{*},s^{+}\right].
      \end{cases}
    \end{equation}
  \item if $s^{+}<s^{-}$ there exists
    $s^{*}\in\left[s^{+},s^{-}\right]$ such that
    \begin{equation}
      \label{eq:entropy1}
      \begin{cases}
        g\left(\sigma,c^{R},k_{0}\right)\le \lambda_{c}
        &\text{ for all }\sigma\in\left[s^{+},s^{*}\right],\\
        g\left(\sigma,c^{L},k_{0}\right)\le \lambda_{c}
        &\text{ for all }\sigma\in\left[s^{*},s^{-}\right].
      \end{cases}
    \end{equation}
  \end{itemize}
\end{itemize}

\noindent
Let $y_{1}\left(t\right),\ldots,y_{M}\left(t\right)$ denote all the
$c$ wave
fronts at the time $t$. Their initial positions are
the discontinuity points of $\bar c^{\varepsilon}$. 
We will show that 
they do not interact
between each other
and keep the same number and order as time goes
on.

We define the open regions 
\begin{displaymath}
  \Omega_{0,j}=
  \left\{(t,x)\in\left[0,+\infty\right)\times \mathbb{R}
    \mid x<\bar x_{1}\land y_{j}(t)<x<y_{j+1}(t)\right\},
\end{displaymath}
and the flux
\begin{displaymath}
  F^{\varepsilon}\left(t,x,\sigma\right)=f^{0,j}\left(\sigma\right),\text{
  for } \left(t,x\right)\in\Omega_{0,j}.
\end{displaymath}
The wave front tracking approximation $s^{\varepsilon}$ so constructed
is an exact weak entropic solution to
\begin{displaymath}
  \begin{cases}
    \partial_{t}s^{\varepsilon}+\partial_{x}\left[F^{\varepsilon}\left(t,x,s^{\varepsilon}\right)\right]=0,\\
    \partial_{t}\left(c^{\varepsilon}s^{\varepsilon}\right)
    +\partial_{x}\left[c^{\varepsilon}F^{\varepsilon}\left(t,x,s^{\varepsilon}\right)\right]_{x}=0,\\
  \end{cases}
\end{displaymath}
in the region $x<\bar x_{1}$.

Since the $c$ family is linearly degenerate, $c$ waves will not interact with each other. 
Indeed, given two consecutive $c$ waves located respectively in $y_{j}(t)$
and $y_{j+1}(t)$, the first conservation law in the previous system
implies
\begin{displaymath}
  \begin{split}
    \frac{d}{dt}\int_{y_{j}(t)}^{y_{j+1}(t)}s^{\varepsilon}\left(t,x\right)\;dx
    &= \dot
    y_{j+1}\left(t\right)s^{\varepsilon}\left(t,y_{j+1}\left(t\right)-\right)
    -\dot
    y_{j}\left(t\right)s^{\varepsilon}\left(t,y_{j}\left(t\right)+\right)\\
    &\quad-f^{0,j}\left(s^{\varepsilon}\left(t,y_{j+1}\left(t\right)-\right)\right)
      +f^{0,j}\left(s^{\varepsilon}\left(t,y_{j}\left(t\right)+\right)\right)=0
      \end{split}
\end{displaymath}
since $\dot y = \lambda_{c}=\frac{f}{\sigma}$. Hence $c$ waves cannot
interact with each other (if $s^{\varepsilon} =0$ between two $c$ waves, then
they both must travel with zero speed and therefore
even in this case they cannot interact). 

Since any interaction with the $k$ wave located at $\bar x_{1}$ 
cannot give rise to waves entering the region $x<\bar x_{1}$,
following~\cite{WSCauchy}, the wave front tracking algorithm can be
carried out for all times in that region. Observe that for a fixed
$\varepsilon$ the total variation of the singular variable $P$
introduced in~\eqref{eq:sig_trans} is bounded. Since the grid
$\mathcal{S}_{0,j}$ contains all the possible maximum points of
$g\left(\cdot ,c_{j},k_{0}\right)$, in the regions $\Omega_{0,j}$,
$P\left(\sigma,c_{j},k_{0}\right)=
\int_{0}^{\sigma}\left|\partial_{\xi}\frac{f^{0,j}\left(\xi\right)}{\xi}\right|d\xi$
for any $\sigma\in\mathcal{S}_{0,j}$. The variable $P$ is
well behaved in the interplay between the two resonant waves $s$ and
$c$. Unfortunately, this behavior is disrupted by the third family of
waves, the $k$ waves,
except in the case where very strong hypothesis are assumed on the
flux as in~\cite{CocliteRisebro}. In fact, in \cite{CocliteRisebro} it
is assumed that the point of
maximum of $g$ does not change with $k$ (actually in~\cite{CocliteRisebro}
our $k$ waves correspond to discontinuities in time because of  
Lagrangian coordinates). 
Since such assumptions are not realistic for our model,
we are not able to prove a bound on the total
variation of $P$ uniformly in $\varepsilon$. 
Instead, we resolve this difficulty by applying a compensated compactness argument. 

Up to now, all the values of $\left(s^{\varepsilon},c^{\varepsilon},
  k^{\varepsilon}\right)$ are determined for $x < \bar x_{1}$. 
Since $k^{\varepsilon}$ is constant in time, its value at the
right of $\bar x_{1}$ is known. The jump conditions $\Delta f=\Delta
c=0$ determine all the values of
$\left(s^{\varepsilon},c^{\varepsilon},
  k^{\varepsilon}\right)$ to the right of
$\bar x_{1}$. These values could introduce new values for the function
$g$ that must be added to the grid, i.e.,
\begin{displaymath}
  \mathcal{G}_{1}=\mathcal{G}_{0}\cup
  \left\{g\left(s^{\varepsilon}\left(t,\bar x_{1}+\right),
      c^{\varepsilon}\left(t,\bar x_{1}+\right),
      k^{\varepsilon}\left(\bar x_{1}+\right)\right)\mid\; t\ge 0\right\}.
\end{displaymath}
This gives the new allowed values for $s$ for  $\bar x_{1}<x<\bar x_{2}$:
\begin{displaymath}
  \mathcal{S}_{1,j}=\left\{\sigma\mid g\left(\sigma,c_{j},k_{1}\right)\in
    \mathcal{G}_{1}\right\}.
\end{displaymath}
Using these values we now build the corresponding approximations $f^{1,j}\left(\sigma\right)$
of the flux as the
linear interpolation of $f\left(\sigma,c_{j},k_{1}\right)$ according to the
points in $\mathcal{S}_{1,j}$. Then we solve, as before, all the
Riemann problems at $t=0$, $\bar x_{1}<x<\bar x_{2}$ and $t\ge 0$,
$x=\bar x_{1}$. As before $c$ waves cannot interact with each other
so,
by induction, we can carry out the wave front tracking algorithm on
the semi plane $t\ge 0$.

We define the open regions ($\bar
x_{0}=y_{0}(t)=-\infty$, $\bar x_{N+1}=y_{M+1}(t)=+\infty$)
\begin{displaymath}
  \Omega_{i,j}=
  \left\{(t,x)\in\left[0,+\infty\right)\times \mathbb{R}
    \mid \bar x_{i}<x<\bar x_{i+1}\land y_{j}(t)<x<y_{j+1}(t)\right\}.
\end{displaymath}
The wave front tracking approximations so obtained are
weak entropic solutions to
\begin{equation}
  \label{eq:approxeq}
  \begin{cases}
    \partial_{t}s^{\varepsilon}+\partial_{x}\left[F^{\varepsilon}\left(t,x,s^{\varepsilon}\right)\right]=0,\\
    \partial_{t}\left(c^{\varepsilon}s^{\varepsilon}\right)
    +\partial_{x}\left[c^{\varepsilon}F^{\varepsilon}\left(t,x,s^{\varepsilon}\right)\right]=0,\\
    \partial_{t}k^{\varepsilon}=0,\\
    \left(s^{\varepsilon},c^{\varepsilon},k^{\varepsilon}\right)(0,x)=
    \left(\bar s^{\varepsilon},\bar c^{\varepsilon},\bar
      k^{\varepsilon}\right)(x),
  \end{cases}
\end{equation}
where the flux $F^{\varepsilon}$ is defined by
\begin{displaymath}
  F^{\varepsilon}\left(t,x,\sigma\right)=f^{i,j}\left(\sigma\right),\text{
  for } \left(t,x\right)\in\Omega_{i,j}.
\end{displaymath}
For all $\left(t,x\right)\in
  \left[0,+\infty\right[\times\mathbb{R}\text{ and
  }\sigma\in\left[0,1\right]$, the  flux satisfies the estimates
  \begin{equation}
    \label{eq:unifEstimates}
  \begin{cases}
    \left|F^{\varepsilon}\left(t,x,\sigma\right)-f\left(\sigma,c^{\varepsilon}\left(t,x\right),k^{\varepsilon}\left(x\right)\right)\right|\le
    \varepsilon,\\
    \left|\partial_{\sigma}F^{\varepsilon}\left(t,x,\sigma\right)-
      \partial_{\sigma}f\left(\sigma,c^{\varepsilon}\left(t,x\right),k^{\varepsilon}\left(x\right)\right)\right|\le
    \frac{\varepsilon}{N+M}.
  \end{cases}
\end{equation}
We remark that, in any region
$\Omega_{i,j}$, $s^{\varepsilon}$ is an entropic
solution
to the scalar conservation law
\begin{displaymath}
  \partial_{t}s^{\varepsilon}+\partial_{x}\left[f^{i,j}\left(s^{\varepsilon}\right)\right]=0.
\end{displaymath}

\section{Entropy estimates}
\label{sec:entropy}
Given a smooth (not necessarily convex) entropy function
$\eta\left(\sigma\right)$ with $\eta(0)=0$, we 
define the corresponding entropy flux $q^{\varepsilon}$
(relative to the approximate flux $F^{\varepsilon}$) as
\begin{equation}
  \label{eq:entflux}
      q^{\varepsilon}\left(t,x,\sigma\right) = \int_{0}^{\sigma}\eta'\left(\varsigma\right)
    \partial_{\varsigma}F^{\varepsilon}\left(t,x,\varsigma\right)d\varsigma.
\end{equation}
\begin{theorem}
  \label{thm:upperbound}
  For a fixed \textbf{convex} entropy $\eta$, 
  the positive part of the measure
  \begin{equation}
    \label{eq:entropy_ineq}
    \mu_{\varepsilon}=
    \partial_{t}\left[\eta\left(s^{\varepsilon}\right)\right]+\partial_{x}\left[q^{\varepsilon}\left(t,x,s^{\varepsilon}\right)\right]
  \end{equation}
  is uniformly (with
  respect to the approximation parameter $\varepsilon$)
  locally bounded.
  More precisely, for any compact set $K\subset
  \left]0,+\infty\right[\times\mathbb{R}$ there exists a constant
  $C_{K}$ such that
  \begin{displaymath}
    \mu_{\varepsilon}^{+}\left(K\right)\le C_{K}.
  \end{displaymath}
Here the constant $C_K$ may depend on $\eta$, $f$ and on the  total
  variation of the initial data $\bar c$ and $\bar k$, but it
  does not depend on the approximation
  parameter $\varepsilon$. 
\end{theorem}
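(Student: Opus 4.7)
The plan is to show that $\mu_\varepsilon^+$ is concentrated on the $c$- and $k$-fronts, with the entropy production along each front pointwise in $t$ bounded by a constant depending only on $\eta$ and $f$ times the corresponding jump of $\bar c^\varepsilon$ or $\bar k^\varepsilon$, up to an $O(\varepsilon/(N+M))$ remainder coming from~\eqref{eq:unifEstimates}. Summing over the time-projection of $K$ and then over all fronts then yields a bound of the form $C\cdot |\pi_t K|\cdot(\tv\bar c+\tv\bar k)+O(\varepsilon)$, which is $\le C_K$ uniformly in $\varepsilon$.

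The starting observation is that inside each region $\Omega_{i,j}$ the function $s^\varepsilon$ is a Kru\v zkov entropy solution of $\partial_t s+\partial_x f^{i,j}(s)=0$ and $q^\varepsilon$ coincides with its entropy flux, so for convex $\eta$ one has $\mu_\varepsilon\le 0$ there as a distribution; in particular $s$-fronts contribute only non-positive singular parts. Since $K$ is compact in the open strip $t>0$, no trace at $t=0$ enters, and $\mu_\varepsilon^+$ is supported on the $c$-fronts $x=y_j(t)$ and the $k$-fronts $x=\bar x_i$. On each such front the measure has density $E(t)=[q^\varepsilon]-\dot y\,[\eta]$ with respect to $dt$, and \eqref{eq:unifEstimates} lets us replace $q^\varepsilon$ by $Q(s,c,k):=\int_0^s\eta'(\varsigma)\partial_\varsigma f(\varsigma,c,k)\,d\varsigma$ modulo an error of size $\|\eta'\|_\infty\,\varepsilon/(N+M)$.

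For a $c$-front, $\dot y=\lambda_c$ and Rankine--Hugoniot gives $f(s^\pm,c^{L/R},k)=\lambda_c s^\pm$. Setting $\Phi^{L/R}(\varsigma):=f(\varsigma,c^{L/R},k)-\lambda_c\varsigma$ and inserting the transition point $s^*$ furnished by \eqref{eq:entropy2}/\eqref{eq:entropy1}, I decompose
\[
E_c = \int_{s^-}^{s^*}\eta'(\varsigma)\,\partial_\varsigma\Phi^L(\varsigma)\,d\varsigma + \int_{s^*}^{s^+}\eta'(\varsigma)\,\partial_\varsigma\Phi^R(\varsigma)\,d\varsigma + \bigl[Q(s^*,c^R,k)-Q(s^*,c^L,k)\bigr] + O(\varepsilon/(N+M)),
\]
where the bracket is directly $O(|c^R-c^L|)$. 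I then integrate each ``resonant'' integral by parts: the endpoint contributions at $s^-$ and $s^+$ vanish since $\Phi^L(s^-)=\Phi^R(s^+)=0$, and the two inner boundary terms combine into $\eta'(s^*)[\Phi^L(s^*)-\Phi^R(s^*)]=\eta'(s^*)[f(s^*,c^L,k)-f(s^*,c^R,k)]=O(|c^R-c^L|)$. The remaining volume contributions are $-\int\eta''\Phi^L\,d\varsigma-\int\eta''\Phi^R\,d\varsigma$; here \eqref{eq:entropy2}/\eqref{eq:entropy1} is precisely the statement that $\Phi^{L/R}$ has a definite sign on the respective integration interval, so combined with convexity $\eta''\ge 0$ these volume terms carry a favourable sign and can be dropped when estimating $E_c$ from above. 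This gives $E_c \le C(\eta,f)\,|c^R-c^L| + O(\varepsilon/(N+M))$.

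The $k$-front case is simpler: $\dot y=0$ and Rankine--Hugoniot reads $f(s^+,c,k^R)=f(s^-,c,k^L)=:v$. Integrating by parts in $Q$ (using $f(0,c,k)=0$) and rearranging reduces $E_k$, modulo an $O(|k^R-k^L|)$ piece stemming from $\int_0^{\min(s^\pm)}\eta''[f(\cdot,c,k^L)-f(\cdot,c,k^R)]\,d\varsigma$, to an integral of the form $\int_{\min(s^\pm)}^{\max(s^\pm)}\eta''(\varsigma)[v-f(\varsigma,c,\tilde k)]\,d\varsigma$ with $\tilde k\in\{k^L,k^R\}$ chosen according to the sign of $s^+-s^-$. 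Monotonicity of $f(\cdot,c,\tilde k)$ and the identity $v=f(s^\pm,c,\tilde k)$ at one endpoint bound the integrand uniformly by $\|\partial_k f\|_\infty\,|k^R-k^L|$, so $|E_k|\le C(\eta,f)\,|k^R-k^L| + O(\varepsilon/(N+M))$. Summing the positive parts over the at most $M$ $c$-fronts and $N$ $k$-fronts meeting $K$, integrating in time over $\pi_t K$, and using $\tv\bar c^\varepsilon\le\tv\bar c$, $\tv\bar k^\varepsilon\le\tv\bar k$, together with $(N+M)\cdot O(\varepsilon/(N+M))=O(\varepsilon)$, delivers the claim. The main obstacle is the $c$-wave estimate: each of the two split integrals can individually be $O(1)$ because of the resonance of $g$ along the $c$-wave, and only the pairing that simultaneously exploits the convexity of $\eta$ and the pointwise sign information on $\Phi^{L/R}$ provided by the admissibility condition collapses the potentially large contributions into an estimate of size $|c^R-c^L|$.
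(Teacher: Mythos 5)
Your proof is correct and follows essentially the same route as the paper: decompose $\mu_\varepsilon$ front by front, observe that $s$-fronts dissipate entropy for convex $\eta$, and control each $c$- and $k$-front by $O(\varepsilon/(N+M))$ plus a constant times the corresponding jump of $\bar c^\varepsilon$ or $\bar k^\varepsilon$, using the admissibility condition \eqref{eq:entropy2}--\eqref{eq:entropy1} together with $\eta''\ge 0$ for $c$-waves and the continuity of $f$ plus monotonicity of $\sigma\mapsto f$ for $k$-waves, before summing via the total-variation bounds. The only differences are cosmetic reorganizations (splitting at $s^*$ before rather than after integrating by parts on $c$-fronts, and using $\eta''$ instead of $\|\eta'\|_\infty$ on $k$-fronts), so no further comment is needed.
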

\begin{proof}
  Fixing a non negative test function $\phi\in
  C^{\infty}_{c}\left(\left]0,+\infty\right[\times \mathbb{R}\right)$,
  we compute
  \begin{equation}
    \label{eq:sum_waves}
    \begin{split}
      \langle
      \partial_{t}\left[\eta\left(s^{\varepsilon}\right)\right]+\partial_{x}\left[q^{\varepsilon}
      \left(t,x,s^{\varepsilon}\right)\right],\phi\rangle
      &= -\int
      \eta\left(s^{\varepsilon}\right)\partial_{t}\phi+
      q^{\varepsilon}\left(t,x,s^{\varepsilon}\right)\partial_{x}\phi\;
      dtdx\\
      &=\int_{0}^{+\infty}\sum_{\ell=1}^{\mathcal{N}(t)}
      \left[\Delta q^{\varepsilon}_{\ell}\left(t\right)-\Delta\eta_{\ell}\left(t\right)\dot
        \xi_{\ell}\left(t\right) \right]
      \phi\left(t,\xi_{\ell}\left(t\right)\right)dt.
    \end{split}
  \end{equation}
 Here $\xi_{1},\ldots,\xi_{\mathcal{N}(t)}$ are the locations of the
  discontinuities in
  $\left(s^{\varepsilon},c^{\varepsilon},k^{\varepsilon}\right)$,
  and the notation $\Delta$ denotes the jumps:
  \begin{displaymath}
    \begin{cases}
      \Delta\eta_{\ell}\left(t\right)=
      \eta\left(s^{\varepsilon}\left(t,\xi_{\ell}\left(t\right)+\right)\right)
      -\eta\left(s^{\varepsilon}\left(t,\xi_{\ell}\left(t\right)-\right)\right),\\
      \Delta q^{\varepsilon}_{\ell}\left(t\right)=
      q^{\varepsilon}\left(t,\xi_{\ell}(t)+,
        s^{\varepsilon}\left(t,\xi_{\ell}\left(t\right)+\right)\right)-
      q^{\varepsilon}\left(t,\xi_{\ell}(t)-,
        s^{\varepsilon}\left(t,\xi_{\ell}\left(t\right)-\right)\right).
    \end{cases}
  \end{displaymath}
  We study separately the three different kinds of waves
  and denote with the superscripts ``$-$'' and ``$+$'' the
  values computed respectively to the left and the right of the
  discontinuities. We omit the superscript for the  
  values that do not change across the discontinuities.

\medskip
\noindent\textbf{$s$ waves:} Both $c$ and $k$ are
constant while the jump in $s$ satisfies Rankine-Hugoniot and is
entropic according to the approximate flux. If
$\left(t,\xi_{\ell}\left(t\right)\right)\in\Omega_{i,j}$, then
\begin{displaymath}
  \dot
  \xi_{\ell}\left(s^{+}-s^{-}\right)=f^{i,j}\left(s^{+}\right)-f^{i,j}\left(s^{-}\right)
  =f^{+}-f^{-}.
\end{displaymath}
Hence, applying the definition of $q^{\varepsilon}$ and
integrating by parts, we  compute
\begin{displaymath}
  \begin{split}
    \Delta
    q^{\varepsilon}_{\ell}-\Delta\eta_{\ell}\dot
    \xi_{\ell}
    &=
    \int_{s^{-}}^{s^{+}}\eta'\left(\varsigma\right)
    \left[\partial_{\varsigma}f^{i,j}\left(\varsigma\right)-\dot
      \xi_{\ell}\right]d\varsigma \\
    &=\left[\eta'\left(\varsigma\right)\left(f^{i,j}\left(\varsigma\right)-f^{-}
      -
      \dot\xi_{\ell}\left(\varsigma-s^{-}\right)\right)\right]_{s^{-}}^{s^{+}}\\
  &\qquad\qquad -\int_{s^{-}}^{s^{+}}\eta''\left(\varsigma\right)
  \left[f^{i,j}\left(\varsigma\right)-f^{-}-\dot\xi_{\ell}\left(\varsigma-s^{-}\right)\right]
  d\varsigma\\
  &\le 0.
  \end{split}
\end{displaymath}
Since $\eta''\ge 0$ and the $s$ wave in $\xi_{\ell}$ is an entropic
wave for the flux $f^{i,j}$, therefore for all
$\varsigma\in\left[\min\left\{s^{-},s^{+}\right\},
\max\left\{s^{-},s^{+}\right\}\right]$ one has
\begin{displaymath}
\sign\left(s^{+}-s^{-}\right)
\left[f^{i,j}\left(\varsigma\right)-f^{-}-
  \frac{f^{+}-f^{-}}{s^{+}-s^{-}}\left(\varsigma-s^{-}\right)\right]
\ge 0.
\end{displaymath}

\medskip
\noindent\textbf{$c$ waves:}  Both $k$ and
$g=\frac{f}{s}$ are constants and the speed $\dot \xi_{\ell}$ of the
wave equals
$g\left(s^{-},c^{-},k\right)=g\left(s^{+},c^{+},k\right)$, where
 $\xi_{\ell}$ is the boundary between the regions $\Omega_{i,j}$
and $\Omega_{i,j+1}$. 
Denoting by $C$ a generic constant that depends
only on $\eta$ and $f$,  the
uniform estimates~\eqref{eq:unifEstimates}  lead to
\begin{displaymath}
  \begin{split}
    \Delta
    &q^{\varepsilon}_{\ell}-\Delta\eta_{\ell}\dot
    \xi_{\ell}
    =
    \int_{0}^{s^{+}}\eta'\left(\varsigma\right)
    \partial_{\varsigma}f^{i,j+1}\left(\varsigma\right)d\varsigma
    -\int_{0}^{s^{-}}\eta'\left(\varsigma\right)
    \partial_{\varsigma}f^{i,j}\left(\varsigma\right)d\varsigma
      -\dot \xi_{\ell}\left(\eta\left(s^{+}\right)-\eta\left(s^{-}\right)\right)\\
    &\le C\frac{\varepsilon}{N+M}+
    \int_{0}^{s^{+}}\eta'\left(\varsigma\right)
    \partial_{\varsigma}f\left(\varsigma,c^{+},k\right)d\varsigma
    -\int_{0}^{s^{-}}\eta'\left(\varsigma\right)
    \partial_{\varsigma}f\left(\varsigma,c^{-},k\right)d\varsigma\\
    &\qquad
    -\int_{0}^{s^{+}}\eta'\left(\varsigma\right)\dot\xi_{\ell}d\varsigma
    +\int_{0}^{s^{-}}\eta'\left(\varsigma\right)\dot\xi_{\ell}d\varsigma\\
    &\le C\frac{\varepsilon}{N+M}+
    \int_{0}^{s^{+}}\eta'\left(\varsigma\right)
    \left[\partial_{\varsigma}f\left(\varsigma,c^{+},k\right)-\dot \xi_{\ell}\right]d\varsigma
    -\int_{0}^{s^{-}}\eta'\left(\varsigma\right)
    \left[\partial_{\varsigma}f\left(\varsigma,c^{-},k\right)
    -\dot \xi_{\ell}\right]d\varsigma\\
  & \le C\frac{\varepsilon}{N+M}-
    \int_{0}^{s^{+}}\eta''\left(\varsigma\right)
    \left[f\left(\varsigma,c^{+},k\right)-\dot \xi_{\ell}\varsigma\right]d\varsigma
    +\int_{0}^{s^{-}}\eta''\left(\varsigma\right)
    \left[f\left(\varsigma,c^{-},k\right)
    -\dot \xi_{\ell}\varsigma\right]d\varsigma.\\
\end{split}
\end{displaymath}
Here we have integrated by parts and used the relations
\[f\left(0,c^{\pm},k\right)=0, \qquad f\left(s^{\pm},c^{\pm},k\right)
=s^{\pm}g\left(s^{\pm},c^{\pm},k\right)=s^{\pm}\dot \xi_{\ell}.\]
Suppose $s^{-}\le s^{+}$, the other case being symmetric. Because of
the entropy condition on
$c$ waves~\eqref{eq:entropy2} there exists
$s^{*}\in\left[s^{-},s^{+}\right]$ such that 
\begin{displaymath}
  \begin{cases}
    g\left(\varsigma,c^{-},k\right)\ge \dot\xi_{\ell}&\text{ for all
    }\varsigma
    \in\left[s^{-},s^{*}\right],\\
    g\left(\varsigma,c^{+},k\right)\ge \dot\xi_{\ell}&\text{ for all
    }\varsigma
    \in\left[s^{*},s^{+}\right].
  \end{cases}
\end{displaymath}
The estimates~\eqref{eq:unifEstimates}  further lead to
\begin{displaymath}
  \begin{split}
    \Delta
    q^{\varepsilon}_{\ell}-\Delta\eta_{\ell}\dot
    \xi_{\ell}
  & \le-
    \int_{s^{*}}^{s^{+}}\eta''\left(\varsigma\right)
    \left[f\left(\varsigma,c^{+},k\right)-\dot \xi_{\ell}\varsigma\right]d\varsigma
    +\int_{s^{*}}^{s^{-}}\eta''\left(\varsigma\right)
    \left[f\left(\varsigma,c^{-},k\right)
      -\dot \xi_{\ell}\varsigma\right]d\varsigma\\
    &\qquad
    +C\left(\frac{\varepsilon}{N+M}+\left|c^{+}-c^{-}\right|\right)\\
  & =-
    \int_{s^{*}}^{s^{+}}\eta''\left(\varsigma\right)
    \varsigma
    \left[g\left(\varsigma,c^{+},k\right)-\dot \xi_{\ell}\right]d\varsigma
    +\int_{s^{*}}^{s^{-}}\eta''\left(\varsigma\right)
    \varsigma\left[g\left(\varsigma,c^{-},k\right)
      -\dot \xi_{\ell}\right]d\varsigma\\
    &\qquad
    +C\left(\frac{\varepsilon}{N+M}+\left|c^{+}-c^{-}\right|\right)\\
    &\le
    C\left(\frac{\varepsilon}{N+M}+\left|\Delta c_{\ell}\right|\right).
  \end{split}
\end{displaymath}

\medskip
\noindent\textbf{$k$ waves:} For a $k$ wave, both $c$ and $f$ are constant and
$\dot \xi_{\ell}=0$, where
$\xi_{\ell}$ is the boundary between two regions $\Omega_{i,j}$
and $\Omega_{i+1,j}$.  We have
\begin{displaymath}
  \begin{split}
    \Delta
    &q^{\varepsilon}_{\ell}-\Delta\eta_{\ell}\dot
    \xi_{\ell}
    =
    \int_{0}^{s^{+}}\eta'\left(\varsigma\right)
    \partial_{\varsigma}f^{i+1,j}\left(\varsigma\right)d\varsigma
    -\int_{0}^{s^{-}}\eta'\left(\varsigma\right)
    \partial_{\varsigma}f^{i,j}\left(\varsigma\right)d\varsigma
      \\
    &\le C\frac{\varepsilon}{N+M}+
    \int_{0}^{s^{+}}\eta'\left(\varsigma\right)
    \partial_{\varsigma}f\left(\varsigma,c,k^{+}\right)d\varsigma
    -\int_{0}^{s^{-}}\eta'\left(\varsigma\right)
    \partial_{\varsigma}f\left(\varsigma,c,k^{-}\right)d\varsigma\\
    &\le C\left(\frac{\varepsilon}{N+M}+\left|k^{+}-k^{-}\right|\right)+
    \int_{s^{-}}^{s^{+}}\eta'\left(\varsigma\right)
    \partial_{\varsigma}f\left(\varsigma,c,k^{-}\right)d\varsigma
\\
    &\le C\left(\frac{\varepsilon}{N+M}+\left|\Delta k_{\ell}\right|\right)+
    \left\|\eta'\right\|_{\infty}\sign\left(s^{+}-s^{-}\right)\int_{s^{-}}^{s^{+}}
    \partial_{\varsigma}f\left(\varsigma,c,k^{-}\right)d\varsigma
\\
    &= C\left(\frac{\varepsilon}{N+M}+\left|\Delta k_{\ell}\right|+
    \left| f\left(s^{+},c,k^{-}\right)-f\left(s^{-},c,k^{-}\right)\right|\right)
\\
    &= C\left(\frac{\varepsilon}{N+M}+\left|\Delta k_{\ell}\right|+
    \left| f\left(s^{+},c,k^{-}\right)-f\left(s^{+},c,k^{+}\right)\right|\right)
\\
    &= C\left(\frac{\varepsilon}{N+M}+\left|\Delta k_{\ell}\right|\right)
\end{split}
\end{displaymath}
where we used the fact that
$\partial_{\varsigma}f\left(\varsigma,c,k^{-}\right)\ge 0$ and that
$f\left(s^{-},c,k^{-}\right) = f\left(s^{+},c,k^{+}\right)$.

\bigskip

Finally, if the compact support of $\phi$ is contained in
$\left]0,T\right[\times\mathbb{R}$, equality~\eqref{eq:sum_waves} and
the previous analysis on the three
types of waves lead to
\begin{displaymath}
      \begin{split}
      \langle
      \partial_{t}\left[\eta\left(s^{\varepsilon}\right)\right]+\partial_{x}\left[q^{\varepsilon}
      \left(t,x,s^{\varepsilon}\right)\right],\phi\rangle
      &\le CT\left(\frac{N\varepsilon + M\varepsilon}{N+M} + \tv\left\{\bar c\right\}
        +\tv\left\{\bar k\right\}\right)
      \left\|\phi\right\|_{\infty}\\
      &\le CT\left(1 + \tv\left\{\bar c\right\}
        +\tv\left\{\bar k\right\}\right)
      \left\|\phi\right\|_{\infty}
    \end{split}
  \end{displaymath}
  for any $\varepsilon\in\left]0,1\right[$, proving the theorem.
\end{proof}

\begin{theorem}
  \label{thm:compactness}
  For any smooth entropy $\eta$ (even non convex) and decreasing
  sequence $\varepsilon_{j}\to 0$ there exists a
  compact set $\mathcal{K}\subset H^{-1}_{loc}\left(\Omega\right)$,
  independent of ${j}$,
  such that
  \begin{displaymath}
        \mu_{\varepsilon_{j}}=
        \partial_{t}\left[\eta\left(s^{\varepsilon_{j}}\right)\right]+\partial_{x}\left[q^{\varepsilon_{j}}
        \left(t,x,s^{\varepsilon_{j}}\right)\right]\in\mathcal{K}.
  \end{displaymath}
\end{theorem}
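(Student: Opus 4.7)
The plan is to apply the classical Murat--Tartar compactness lemma: a sequence of distributions on $\Omega$ that is simultaneously bounded in $W^{-1,\infty}_{loc}(\Omega)$ and in $\mathcal{M}_{loc}(\Omega)$ is relatively compact in $H^{-1}_{loc}(\Omega)$. Everything then reduces to verifying these two bounds for $\mu_{\varepsilon_{j}}$. As a preliminary reduction, since $\eta\in C^{2}([0,1])$ with $\eta(0)=0$, one may pick $C>\|\eta''\|_{\infty}$ and write $\eta=\eta_{1}-\eta_{2}$ with $\eta_{1}(s):=\eta(s)+\frac{C}{2}s^{2}$ and $\eta_{2}(s):=\frac{C}{2}s^{2}$, both smooth, convex, and vanishing at zero. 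Because formula~\eqref{eq:entflux} is linear in $\eta$, the corresponding entropy fluxes and measures split as $q^{\varepsilon}=q^{\varepsilon}_{1}-q^{\varepsilon}_{2}$ and $\mu_{\varepsilon}=\mu_{1,\varepsilon}-\mu_{2,\varepsilon}$, so it suffices to prove the theorem for each convex piece $\mu_{i,\varepsilon}$ separately.

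The $W^{-1,\infty}_{loc}$ bound is essentially free. Since $s^{\varepsilon},c^{\varepsilon},k^{\varepsilon}\in[0,1]$ and $f$ is bounded on $[0,1]^{3}$, the uniform flux estimate~\eqref{eq:unifEstimates} yields $\|\eta_{i}(s^{\varepsilon})\|_{L^{\infty}}+\|q^{\varepsilon}_{i}(t,x,s^{\varepsilon})\|_{L^{\infty}}\leq C(\eta_{i},f)$ independently of $\varepsilon$, which directly bounds $\mu_{i,\varepsilon}=\partial_{t}\eta_{i}(s^{\varepsilon})+\partial_{x}q^{\varepsilon}_{i}(\cdot,s^{\varepsilon})$ in $W^{-1,\infty}_{loc}(\Omega)$.

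The key step is upgrading the one-sided estimate of Theorem~\ref{thm:upperbound} to a total-variation bound for each convex piece $\mu_{i,\varepsilon}$. Fix a compact $K\subset\Omega$ and a cutoff $\phi\in C^{\infty}_{c}(\Omega)$ with $\phi\equiv 1$ on $K$, $0\leq\phi\leq 1$, supported in a slightly larger compact $K'$. The $L^{\infty}$ bounds of the previous step give
\begin{displaymath}
|\langle\mu_{i,\varepsilon},\phi\rangle|=\left|\int\left[\eta_{i}(s^{\varepsilon})\,\partial_{t}\phi+q^{\varepsilon}_{i}(t,x,s^{\varepsilon})\,\partial_{x}\phi\right]dtdx\right|\leq C_{K'}.
\end{displaymath}
Combining this with $\mu_{i,\varepsilon}^{+}(K')\leq C$ from Theorem~\ref{thm:upperbound}, the Jordan decomposition $\mu_{i,\varepsilon}=\mu_{i,\varepsilon}^{+}-\mu_{i,\varepsilon}^{-}$ yields $\mu_{i,\varepsilon}^{-}(K)\leq\int\phi\,d\mu_{i,\varepsilon}^{+}-\langle\mu_{i,\varepsilon},\phi\rangle\leq C$, so $|\mu_{i,\varepsilon}|(K)\leq C$ uniformly in $\varepsilon$.

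To conclude, bounded sequences of Radon measures on the two-dimensional open set $\Omega$ are relatively compact in $W^{-1,q}_{loc}(\Omega)$ for every $1\leq q<2$ (Rellich--Kondrachov via duality), and interpolating this compactness with the uniform $W^{-1,\infty}_{loc}$ bound produces relative compactness in $H^{-1}_{loc}(\Omega)$. I expect the main obstacle to be the third step above: Theorem~\ref{thm:upperbound} supplies only the positive part of the entropy production, so to recover the negative part one has to exploit that $\mu_{i,\varepsilon}$ itself is a priori controlled directly by the $L^{\infty}$ norms of $\eta_{i}(s^{\varepsilon})$ and $q^{\varepsilon}_{i}$. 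This balance between the asymmetric production estimate and the symmetric $L^{\infty}$ estimate is the essential point; the non-convex reduction and the Murat--Tartar interpolation are then standard.
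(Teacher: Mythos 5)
Your proposal is correct and follows essentially the same route as the paper: reduce to convex entropies by adding a large multiple of $\sigma^{2}$ (using the linearity of~\eqref{eq:entflux} in $\eta$), combine the $L^{\infty}$-based bound on $\langle\mu_{\varepsilon},\phi\rangle$ with the one-sided bound of Theorem~\ref{thm:upperbound} to control the total variation, and conclude by Murat's lemma (the paper cites \cite[Lemma~17.2.2]{Dafermos}, which is exactly the $\mathcal{M}\cap W^{-1,\infty}\Rightarrow H^{-1}_{loc}$ interpolation you invoke). The only cosmetic difference is that the paper evaluates $\mu_{\varepsilon}$ on rectangles via boundary integrals and writes $|\mu_{\varepsilon}|=2\mu_{\varepsilon}^{+}-\mu_{\varepsilon}$, whereas you use a cutoff function and the Jordan decomposition.
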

\begin{proof}
  We apply standard arguments in compensated compactness
  theory~\cite{diperna}. 
  Integrating the measure $\mu_{\varepsilon}$ over a rectangle (with $t_{1}>0$)
  $R=\left[t_{1},t_{2}\right]\times \left[-L,L\right]$ we obtain
  \begin{displaymath}
    \begin{split}
      \mu_{\varepsilon}\left(R\right)&=\int_{t_{1}}^{t_{2}}
      q^{\varepsilon}\left(t,L+,s^{\varepsilon}\left(t,L+\right)\right)
      -q^{\varepsilon}\left(t,-L-,s^{\varepsilon}\left(t,-L-\right)\right)dt\\
&\qquad +\int_{-L}^{L}
      \eta\left(s^{\varepsilon}\left(t_{2}+,x\right)\right)-
      \eta\left(s^{\varepsilon}\left(t_{1}-,x\right)\right)dx.
    \end{split}
  \end{displaymath}
  Since $s^{\varepsilon}$ is uniformly bounded, there exists a
  constant $\bar C_{R}$ such that
  $\left|\mu_{\varepsilon}\left(R\right)\right|\le \bar C_{R}$ for any
  $\varepsilon\in\left]0,1\right[$. If   $\eta$ is
  convex,  we can apply 
  Theorem~\ref{thm:upperbound}  to estimate the total
  variation of $\mu_{\varepsilon}$ uniformly with respect to $\varepsilon$:
  \begin{displaymath}
    \left|\mu_{\varepsilon}\right|\left(R\right) =
    \mu_{\varepsilon}^{+}\left(R\right)+
    \mu_{\varepsilon}^{-}\left(R\right)=
    2\mu_{\varepsilon}^{+}\left(R\right) -
    \mu_{\varepsilon}\left(R\right)
    \le 2C_{R} + \bar C_{R}.
  \end{displaymath}
  If $\eta$ is not convex, then we take a strictly convex entropy $\eta^{*}$
  (for instance $\eta^{*}\left(\sigma\right)=\sigma^{2}$) and define
  $\tilde \eta = \eta + H \eta^{*}$. The entropy $\tilde \eta$
  is convex for a sufficiently big constant $H$. 
  We denote by $\mu_{\varepsilon}$,
  $\mu_{\varepsilon}^{*}$ and $\tilde \mu_{\varepsilon}$ the
  measures corresponding to the entropies $\eta$, $\eta^{*}$ and $\tilde
  \eta$. 
  Since the definition of the entropy
  flux~\eqref{eq:entflux} is linear with respect to the associated
  entropy, 
  the measures satisfy $\tilde
  \mu_{\varepsilon}=\mu_{\varepsilon}+H\mu_{\varepsilon}^{*}$. Hence
  the inequality 
  \begin{displaymath}
    \left|\mu_{\varepsilon}\right|\left(R\right)\le\left|\tilde
      \mu_{\varepsilon}\right|\left(R\right) 
    + H \left|\mu^{*}_{\varepsilon}\right|\left(R\right)
  \end{displaymath}
  holds.
  This means that $\left|\mu_{\varepsilon}\right|\left(R\right)$ is
  bounded 
  uniformly with respect to $\varepsilon$ since both
  $\tilde \mu_{\varepsilon}$ and $\mu^{*}_{\varepsilon}$ are
  associated with convex entropies.
  Since the measure $\mu_{\varepsilon}=
    \partial_{t}\left[\eta\left(s^{\varepsilon}\right)\right]+\partial_{x}\left[q^{\varepsilon}\left(t,x,s^{\varepsilon}\right)\right]
$ restricted to $R$ lies both in a bounded set of the space of
measures $\mathcal{M}\left(R\right)$ and in a bounded set of
$W^{-1,\infty}\left(R\right)$, \cite[Lemma~17.2.2]{Dafermos} allows us
to conclude the proof of the theorem. 
\end{proof}

\section{Strong Convergence}
\label{sec:conv}
The following result is a step 
towards the proof of Theorem~\ref{thm:main}.
\begin{theorem}
  \label{thm:strongconvergence}
  There exists a sequence $\varepsilon_{j}\to 0$ such that
  $\left(s^{\varepsilon_{j}},c^{\varepsilon_{j}},k^{\varepsilon_{j}}\right)
  \to \left(\tilde s,\tilde c,\tilde k\right)$ in $L^{1}_{loc}\left(\Omega\right)$.
\end{theorem}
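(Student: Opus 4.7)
The plan is to handle the three components of the front tracking approximation separately, with compensated compactness entering only for the saturation component $s^\varepsilon$.

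For $k^\varepsilon$: by construction $k^\varepsilon(t,x) = \bar k^\varepsilon(x)$ is independent of $t$ and satisfies $\|\bar k^\varepsilon - \bar k\|_{\mathbf{L}^\infty} < \varepsilon$, so $k^\varepsilon \to \bar k =: \tilde k$ uniformly, \emph{a fortiori} in $L^1_{loc}(\Omega)$. For $c^\varepsilon$: the Riemann solver designed in Section~\ref{sec:FT} guarantees that $c$-waves neither interact with each other nor create new jumps when they interact with $s$- or $k$-waves, and $c$ is continuous across every $k$-wave. Consequently $\tv c^\varepsilon(t,\cdot) \le \tv \bar c$ uniformly in $t$ and $\varepsilon$, while the boundedness of the $c$-wave speed $\lambda_c = f/s \in [0,1]$ yields uniform $L^1$-Lipschitz regularity in time. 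Helly's theorem then extracts a subsequence $c^{\varepsilon_j} \to \tilde c$ in $L^1_{loc}(\Omega)$.

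Now for $s^\varepsilon$, which is only bounded in $L^\infty$ without any available BV control: along a further subsequence (still denoted $\varepsilon_j$) I extract a Young measure $\{\nu_{t,x}\}_{(t,x)\in\Omega}$ supported in $[0,1]$ representing the weak-$\ast$ limits of $\psi(s^{\varepsilon_j})$ for continuous $\psi$. For a smooth entropy $\eta$, denote by $Q(t,x,\sigma) = \int_0^\sigma \eta'(\varsigma)\,\partial_\varsigma f(\varsigma,\tilde c(t,x),\tilde k(x))\,d\varsigma$ the entropy flux associated with the \emph{limit} flux. Combining the uniform estimates~\eqref{eq:unifEstimates} with the strong $L^1_{loc}$ convergence of $c^{\varepsilon_j}, k^{\varepsilon_j}$ and the smoothness of $f$ in $(\gamma,\kappa)$ yields
\[
q^{\varepsilon_j}(t,x,s^{\varepsilon_j}) - Q(t,x,s^{\varepsilon_j}) \to 0 \quad \text{in } L^1_{loc}(\Omega),
\]
so that Theorem~\ref{thm:compactness} ensures $\partial_t\eta(s^{\varepsilon_j}) + \partial_x Q(t,x,s^{\varepsilon_j})$ still lies in a compact subset of $H^{-1}_{loc}(\Omega)$. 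Applying the Murat--Tartar div-curl lemma to two such entropy--flux pairs then yields the classical commutation identity
\[
\langle\nu_{t,x},\eta_1 Q_2 - \eta_2 Q_1\rangle = \langle\nu_{t,x},\eta_1\rangle\langle\nu_{t,x},Q_2\rangle - \langle\nu_{t,x},\eta_2\rangle\langle\nu_{t,x},Q_1\rangle
\]
at almost every $(t,x)\in\Omega$ and for every pair of smooth entropies $\eta_1,\eta_2$.

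Finally, at almost every $(t,x)$ the map $\sigma\mapsto f(\sigma,\tilde c(t,x),\tilde k(x))$ is the Buckley--Leverett profile, which by~\eqref{fconds} has a single inflection point and is therefore affine on no non-degenerate subinterval of $[0,1]$. DiPerna's classical reduction of the commutation identity under this weak genuine-nonlinearity condition forces $\nu_{t,x} = \delta_{\tilde s(t,x)}$ for a.e.\ $(t,x)$, which is equivalent to $s^{\varepsilon_j} \to \tilde s$ almost everywhere and hence, by dominated convergence and the uniform $L^\infty$ bound, in $L^p_{loc}(\Omega)$ for every $p<\infty$. The principal obstacle is the $(t,x)$-dependence of the flux inside the entropy pairs: it is overcome precisely because $c^\varepsilon$ and $k^\varepsilon$ converge strongly, which lets us replace $q^{\varepsilon_j}$ by the limit-flux entropy flux $Q$ \emph{before} invoking div-curl and thereby reduce to a scalar Tartar argument pointwise in $(t,x)$.
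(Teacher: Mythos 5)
Your proposal is correct and rests on the same core mechanism as the paper's proof: Helly for $c^{\varepsilon},k^{\varepsilon}$, the $H^{-1}_{loc}$ compactness of the entropy productions for the \emph{limit} flux obtained from \eqref{eq:unifEstimates} and the strong convergence of $c^{\varepsilon},k^{\varepsilon}$, the div--curl lemma applied to the pair of entropies $\eta_{1}(\sigma)=\sigma$ and $\eta_{2}(\sigma)=F(t,x,\sigma)$, and the strict Jensen inequality coming from the single inflection point of the Buckley--Leverett profile (your ``weak genuine nonlinearity'' is exactly the positivity of the functional $I$ in \eqref{eq:jensen_def}). The genuine difference is in how the $(t,x)$-dependence of the flux is handled in the reduction step. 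You localize by passing through a Young measure $\nu_{t,x}$ and then invoking the scalar Tartar--DiPerna reduction pointwise; for this to be rigorous you need the Carath\'eodory version of the fundamental theorem of Young measures, since the integrands $Q(t,x,\sigma)$ and the determinant $\eta_{1}Q_{2}-\eta_{2}Q_{1}$ depend measurably on $(t,x)$ and not only on $\sigma$ --- this is true (the $Q$'s are uniformly Lipschitz in $\sigma$ and measurable in $(t,x)$) but should be stated, because it is precisely the point the paper avoids: there the coefficients are frozen at a point $(\tau,y)$, the entropy $\eta_{(\tau,y)}(\sigma)=F(\tau,y,\sigma)$ is used, the error is controlled quantitatively by $\sup_{\sigma}\lvert F(\tau,y,\sigma)-F(t,x,\sigma)\rvert$ as in \eqref{eq:q2_and_I}, and a Lebesgue-point argument removes it. Your route is shorter and more modular at the price of a heavier black box; the paper's is self-contained and needs only weak$^{*}$ limits of finitely many explicit quantities. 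Two small inaccuracies to fix: the $c$-wave speed is $\lambda_{c}=g(s,c,k)$, which is bounded by $\max_{\sigma}g$ but not by $1$ in general (uniform boundedness is all Helly needs); and ``single inflection point'' should be read together with the statement that $\partial_{\sigma\sigma}f$ vanishes at exactly one $\sigma^{*}(\gamma,\kappa)$, which is what actually excludes affine pieces of the flux.
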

\begin{proof}
  We suitably modify the proof of~\cite[Theorem~4.2]{BGS},  omitting
  some computations already written there.
  The proof takes several steps.
  
  \paragraph{1.}
  Observe that by construction we have
  \begin{displaymath}
  \tv\left\{c^{\varepsilon}\left(t,\cdot\right)\right\}
  = \tv\left\{\bar c^{\varepsilon}\right\}\le\tv\left\{\bar
    c\right\}
\end{displaymath}
  and the wave speeds are uniformly bounded.  
  Hence Helly's theorem implies that there exist a sequence
  $c^{\varepsilon_{j}}\to \tilde c$ in
  $L^{1}_{loc}\left(\Omega\right)$. Since $k^{\varepsilon}$ is
  constant in time,  we have $k^{\varepsilon_{j}}\to \tilde k = \bar k$ in
  $L^{1}_{loc}\left(\Omega\right)$ as well. In the following we
  always take subsequences of this sequence and we will drop the index
  $j$ to simplify notations. 
  We define the limit flux
  \begin{displaymath}
    F\left(t,x,\sigma\right)=f\left(\sigma,\tilde c(t,x),
      \tilde k(x)\right),\quad
    \text{ for all }\left(t,x\right)\in\Omega,\text{ and }\sigma\in\left[0,1\right]
  \end{displaymath}
  and for any entropy $\eta$ we define the limit entropy flux
  \begin{displaymath}
    q\left(t,x,\sigma\right)=\int_{0}^{\sigma}\eta'\left(\varsigma\right)
    \partial_{\varsigma}F\left(t,x,\varsigma\right)d\varsigma.
  \end{displaymath}
  The estimate (uniform  in $\sigma\in\left[0,1\right]$) 
  \begin{displaymath}
    \begin{split}
      &\left|q\left(t,x,\sigma\right)-q^{\varepsilon}\left(t,x,\sigma\right)\right|
      \le \int_{0}^{1}\left|\eta'\left(\varsigma\right)\right|
      \bigg(\left|\partial_{\varsigma}f\left(\varsigma,\tilde c(t,x),
          \tilde k(x)\right)-\partial_{\varsigma}f\left(\varsigma,c^{\varepsilon}(t,x),
          k^{\varepsilon}(x)\right)\right|\\
      &\qquad\qquad\qquad\qquad\qquad\qquad\qquad\qquad\qquad
      +\left|\partial_{\varsigma}f\left(\varsigma,c^{\varepsilon}(t,x),
          k^{\varepsilon}(x)\right)-\partial_{\varsigma}F^{\varepsilon}
        \left(t,x,\varsigma\right)\right|\bigg)d\varsigma\\
      &\quad \le
      C\left(\left|\tilde
          c(t,x)-c^{\varepsilon}(t,x)\right|+\left|\tilde k(x)-k^{\varepsilon}
          (x)\right|+\varepsilon\right)\to 0\qquad
      \text{ in }L^{1}_{loc}\left(\Omega\right)
    \end{split}
  \end{displaymath}
  implies that 
  \[\partial_{x}\left[    q\left(t,x,s^{\varepsilon}\right)-
    q^{\varepsilon}\left(t,x,s^{\varepsilon}\right)
  \right]\to 0, \quad \mbox{ in} ~H^{-1}_{loc}\left(\Omega\right).\]
  Together with Theorem~\ref{thm:compactness}, it implies that the sequence
  \begin{displaymath}
    \partial_{t}\left[\eta\left(s^{\varepsilon}\right)\right]+
    \partial_{x}\left[q\left(t,x,s^{\varepsilon}\right)\right]=
    \partial_{t}\left[\eta\left(s^{\varepsilon}\right)\right]+
    \partial_{x}\left[q^{\varepsilon}\left(t,x,s^{\varepsilon}\right)\right]
    +
\partial_{x}\left[    q\left(t,x,s^{\varepsilon}\right)-
    q^{\varepsilon}\left(t,x,s^{\varepsilon}\right)
  \right]
\end{displaymath}
belongs to a compact set in $H^{-1}_{loc}\left(\Omega\right)$.
\paragraph{2.}
  For any $(t,x)\in\Omega$ and $v,w\in[0,1]$ we define
  \begin{equation}
    \label{eq:jensen_def}
    I(t,x,v,w)~\doteq~(v-w)\int_{w}^{v}\bigl[\partial_{\sigma}F
    (t,x,\sigma)\bigr]^{2}\,d\sigma
    -\bigl[F(t,x,v)-F(t,x,w)\bigr]^{2}.
  \end{equation}
  The following properties hold.
  \begin{enumerate}[(i)]
  \item
    $(v,w)\mapsto I(t,x,v,w)$ is continuous with
    $I(t,x,v,v)=0$ for any $v\in\bigl[0,1\bigr]$.
  \item
    $I(t,x,v,w)>0$ for any
    $v,w\in\bigl[0,1\bigr]$ with $v\not=w$.
  \end{enumerate}
  Indeed, (i) is trivial, while (ii) follows from Jensen's
  inequality and the fact that
  $\sigma\mapsto f\left(\sigma,\gamma,\kappa\right)$ and hence
  $\sigma\mapsto F\left(t,x,\sigma\right)$ have a
    unique inflection point.
  Indeed suppose $w<v$, we observe that $\sigma\mapsto \partial_{\sigma}F(t,x,\sigma)$ is not
  constant  over the interval $\omega\in[w,v]$, and we compute  
  \begin{align*}
    I(t,x,v,w)~&  =~(v-w)\int_w^v\bigl[\partial_{\sigma}F(t,x,\sigma)\bigr]^{2}\,d\sigma
    -(v-w)^{2}\left[\frac{1}{v-w}\int_w^v
    \partial_{\sigma}F(t,x,\sigma)\,d\sigma\right]^{2}\\
               &>~(v-w)\int_w^v\bigl[\partial_{\sigma}F
                 (t,x,\sigma)\bigr]^{2}\,d\sigma
                 -(v-w)^{2}\frac{1}{v-w}\int_w^v
                 \bigl[\partial_{\sigma}F(t,x,\sigma)\bigr]^{2}\,d\sigma\\
                           &=~0.
  \end{align*}
  
\paragraph{3.}
Fixing
  $(\tau,y)\in\Omega$ and we consider the following entropies and
  corresponding limit fluxes
  \begin{align*}
    \eta(\sigma)&=\sigma,
    &q(t,x,\sigma)&=
                                  F(t,x,\sigma),\\
    \eta_{\left(\tau,y\right)}(\sigma)&=F(\tau,y,\sigma),&
    q_{\left(\tau,y\right)}(t,x, \sigma)
                                            &=\int_{0}^{
                                              \sigma}\partial_{\varsigma}F(\tau,y,
                                              \varsigma)
                                              \partial_{\varsigma}F
                                              (t,x,\varsigma)\,d\varsigma.
  \end{align*}
  The same computations as the ones used to obtain~\cite[(4.16)]{BGS}
  prove that there exists a constant $C_{2}\ge 0$ such that
  \begin{multline}
    \label{eq:q2_and_I}
    (v-w)\bigl[q_{\left(\tau,y\right)}(t,x,v)-q_{\left(\tau,y\right)}(t,x,w)\bigr]\\\ge~
    I(t,x,v,w)+\bigl[F(t,x,v)-F(t,x,w)\bigr]^{2}
                      -C_{2}\sup_{\sigma\in[0,1]}
                      \bigl|F(\tau,y,\sigma)-F(t,x,\sigma)\bigr|.
  \end{multline}

  \paragraph{4.}
  By possibly taking subsequences, we can achieve the following
 weak$^{*}$ 
 convergences in
  $L^{\infty}(\Omega)$:
  \begin{equation}
    \label{eq:weak_limits}
    \begin{cases}
    \displaystyle    s^{\varepsilon}(t,x)
    ~\overset{*}{\rightharpoonup}~ 
    \tilde s(t,x),\\[1mm]
    \displaystyle   F
    \bigl(t,x,s^{\varepsilon}(t,x)\bigr)~\overset{*}\rightharpoonup~
                                             \tilde F(t,x),\\[1mm]
\displaystyle    I\bigl(t,x,s^{\varepsilon}(t,x),\tilde
    s(t,x)\bigr)~\overset{*}\rightharpoonup~
                              \tilde I(t,x).
\end{cases}
\end{equation}
  Taking further subsequences (which this time may depend on
  $\left(\tau,y\right)$) we can achieve these further weak$^{*}$ convergences in
  $L^{\infty}(\Omega)$ 
  \begin{align}
    \label{eq:weak_limits_bis}
    F\bigl(\tau,y,s^{\varepsilon}(t,x)\bigr)&~\overset{*}\rightharpoonup~
                                              \tilde F_{\left(\tau,y\right)}(t,x),
    &q_{\left(\tau,y\right)}
      \bigl(t,x,s^{\varepsilon}(t,x)\bigr)&~\overset{*}\rightharpoonup~
                                                          \tilde
                 q_{\left(\tau,y\right)}(t,x).
  \end{align}
  Notice that the weak limits $\tilde s$, $\tilde f$, $\tilde I$ in~\eqref{eq:weak_limits}
  do not
  depend on the values $\left(\tau,y\right)$.
  Step~\textbf{1} implies
  \begin{displaymath}
    \partial_{t}\left[s^{\varepsilon}(t,x)\right]+\partial_{x}
    \left[F\bigl(t,x,s^{\varepsilon}(t,x)\bigr)\right]
    ,\quad
    \partial_{t}\left[F\bigl(\tau,y,s^{\varepsilon}(t,x)\bigr)\right]+
    \partial_{x}\left[q_{\left(\tau,y\right)}\bigl(t,x,s^{\varepsilon}(t,x)\bigr)\right]~\in~\mathcal{K},
  \end{displaymath}
  where $\mathcal{K}$ is a compact set (independent of the subsequence
  index) in
  $H_{loc}^{-1}(\Omega)$. By an application of the
  \emph{div--curl lemma}, see for example  Theorem~16.2.1 in~\cite{Dafermos}, 
 one obtains
  \begin{equation}\label{eq:divcurl_result}\begin{array}{l}
    s^{\varepsilon}(t,x)q_{\left(\tau,y\right)}\bigl(t,x,s^{\varepsilon}(t,x)\bigr)
    -F\bigl(t,x,s^{\varepsilon}(t,x)\bigr)F\bigl(\tau,y,s^{\varepsilon}(t,x)\bigr)\\[3mm]
  \qquad\qquad   \overset{*}{\rightharpoonup}~
    \tilde s(t,x)\tilde q_{\left(\tau,y\right)}(t,x)-
    \tilde F(t,x)\tilde F_{\left(\tau,y\right)}(t,x).
  \end{array}\end{equation}
Following the proof of~\cite[Theorem~4.2]{BGS} we
set $v=s^{\varepsilon}(t,x)$ and $w=\tilde s(t,x)$
in \eqref{eq:q2_and_I} and take the weak$^{*}$ limit as
$\varepsilon\to 0$ to obtain
\[
  \begin{split}
    &\tilde I(t,x)-\left[\tilde s(t,x)\tilde q_{\left(\tau,y\right)}(t,x)-
      \tilde F(t,x)\tilde F_{\left(\tau,y\right)}(t,x)\right]+\tilde s(t,x)
    \tilde q_{\left(\tau,y\right)}(t,x)\\
    & \qquad -2 \tilde F(t,x)F\bigl(t,x,\tilde
    s(t,x)\bigr)+F\bigl(t,x,\tilde s(t,x)\bigr)^{2}~\le
    ~C_{3}\sup_{\sigma\in[0,1]}
    \bigl|F(\tau,y,\sigma)-F(t,x,\sigma)\bigr|.
  \end{split}
  \]
 This can be written as
 \begin{displaymath}
  \begin{split}
    \tilde I(t,x)+\bigl[\tilde F(t,x)-F\bigl(t,x,\tilde s(t,x)\bigr)\bigr]^{2}
    &\le C_{3}\sup_{\sigma\in[0,1]}
    \bigl|F(\tau,y,\sigma)-F(t,x,\sigma)\bigr|\\
    &\qquad+\bigl|\tilde
    F(t,x)\bigr|\bigl|\tilde F_{\left(\tau,y\right)}(t,x)-\tilde
    F(t,x)\bigr|,
  \end{split}
\end{displaymath}
  which holds for any fixed $(\tau,y)\in\Omega$ and
  a.e.~$(t,x)\in\Omega$.
  Taking the weak$^{*}$ limit in
\begin{eqnarray*}
    - \sup_{\sigma\in[0,1]}
    \bigl|F(\tau,y,\sigma)-F(t,x,\sigma)\bigr|
   & \le& F\bigl(\tau,y,s^{\varepsilon}(t,x)\bigr)-F\bigl(t,x,s^{\varepsilon}(t,x)\bigr)\\
   & \le& \sup_{\sigma\in[0,1]}
    \bigl|F(\tau,y,\sigma)-F(t,x,\sigma)\bigr|,
\end{eqnarray*}
  we obtain
  \begin{eqnarray*}
    - \sup_{\sigma\in[0,1]}
    \bigl|F(\tau,y,\sigma)-F(t,x,\sigma)\bigr|
  &  \le &\tilde F_{\left(\tau,y\right)}(t,x)-\tilde F(t,x)\\
   & \le&\sup_{\sigma\in[0,1]}
    \bigl|F(\tau,y,\sigma)-F(t,x,\sigma)\bigr|.
  \end{eqnarray*}
  Hence for any fixed $(\tau,y)\in\Omega$, we have for
  a.e.~$(t,x)\in\Omega$
  \begin{equation}
    \label{eq:last_ineq}
    \tilde I(t,x)+\bigl[\tilde F(t,x)-F\bigl(t,x,\tilde s(t,x)\bigr)\bigr]^{2}
   ~ \le~ C_{4}\sup_{\sigma\in[0,1]}
    \bigl|F(\tau,y,\sigma)-F(t,x,\sigma)\bigr|.
  \end{equation}

  \paragraph{4.} 
We call $E_{1}$  the set of Lebesgue points of the left hand side
  of~\eqref{eq:last_ineq}.  Moreover, for each $\sigma\in [0,1]$  let
  $E_{\sigma}$ 
  be the set of Lebesgue points
  of the map $(t,x) \mapsto F(t,x,\sigma)$. Defining
  \[
  E\doteq E_{1}\cap\left(\displaystyle{\bigcap_{q\in\mathbb{Q}\cap[0,1]}E_{q}}\right),
  \]
  we observe that its
  complement  $\Omega\setminus E$ has zero measure. Take any
  $(\tau,y)\in E$ and fix $\epsilon>0$. Let
  $\mathcal{F}_{\epsilon}\subset \mathbb{Q}\cap [0,1]$ be a finite set such that
  $\displaystyle{\inf_{q\in
      \mathcal{F}_{\epsilon}}}\bigl|q-\sigma\bigr|<\epsilon$
  for every $\sigma\in [0,1]$. Then we have
  \begin{eqnarray}
      \sup_{\sigma\in[0,1]}
      \bigl|F(\tau,y,\sigma)-F(t,x,\sigma)\bigr|
      &\le &\max_{q\in \mathcal{F}_{\epsilon}}
      \bigl|F(\tau,y,q)-F(t,x,q)\bigr| +
      2L\epsilon \nonumber\\
      &\le& \sum_{q\in \mathcal{F}_{\epsilon}}
      \bigl|F(\tau,y,q)-F(t,x,q)\bigr| +
      2L\epsilon,
        \label{eq:finite_ineq}
  \end{eqnarray}
  where $L$ is a uniform Lipchitz constant for $\varsigma\mapsto
  F\left(t,x,\varsigma\right)$.
  Let $B_{\delta}(\tau,y)$ be the disc in $\Omega$ centered
  in $(\tau,y)$ with radius $\delta>0$ whose area is $\pi \delta^2$.
 Integrating~\eqref{eq:last_ineq} and using~\eqref{eq:finite_ineq} we obtain
  \begin{displaymath}
    \begin{split}
      &\frac{1}{\pi \delta^2}\int_{B_{\delta}(\tau,y)} \Big(\tilde
      I(t,x)+\bigl[\tilde F(t,x)-F\bigl(t,x,\tilde
      s(t,x)\bigr)\bigr]^{2}\Big)\,dt\, dx\\
      &\qquad\qquad\qquad\le \frac{C_{4}}{\pi\delta^2} \sum_{q\in \mathcal{F}_{\epsilon}}
      \int_{B_{\delta}(\tau,y)} \bigl|F(\tau,y,q)-F(t,x,q)\bigr|\,dt\,
      dx+ 2C_{4}L\epsilon.
    \end{split}
  \end{displaymath}
  Since $(\tau,y)$ is a Lebesgue point for the map
  $(t,x)\mapsto F(t,x,q)$, for all $q\in \mathcal{F}_{\epsilon}$, letting
  $\delta\to 0$ we obtain
  \begin{displaymath}
    \tilde I(\tau,y)+\bigl[\tilde F(\tau,y)-F\bigl(\tau,y,\tilde s(\tau,y)\bigr)\bigr]^{2}
   ~ \le ~C_{4}L\epsilon\,.
  \end{displaymath}
 Since $\epsilon>0$ is arbitrary, this implies
  \begin{displaymath}
    \tilde I(\tau,y)+\bigl[\tilde F(\tau,y)-F\bigl(\tau,y,\tilde
        s(\tau,y)\bigr)\bigr]^{2}~\le~ 0\qquad \text{ for every~
    }(\tau,y)\in E\,.
  \end{displaymath}
 Hence $\tilde I(t,x)\le 0$ a.e.~in $\Omega$.
 Since, by Step~\textbf{2}, $I\bigl(t,x,s^{\varepsilon}(t,x),
 \tilde s(t,x)\bigr)\ge 0$,
  its weak$^{*}$ limit $\tilde I(t,x)$ must be greater or equal to
  zero almost everywhere.   Therefore we get
  \begin{displaymath}
    \tilde I(t,x)=0,\qquad\text{ and }\qquad \tilde
    F\left(t,x\right)=F\left(t,x,\tilde
      s\left(t,x\right)\right),\quad\text{ a.e. in }\Omega.
  \end{displaymath}
  Since  $I(t,x,s^{\varepsilon}(t,x),\tilde
  s(t,x)\bigr)\geq 0$ converges weakly$^{*}$ to
  zero, we conclude that  it  converges  
  strongly in $L^1_{loc}\left(\Omega\right)$. We can thus
  take a subsequence such that $I(t,x,s^{\varepsilon}(t,x),\tilde
    s(t,x)\bigr)\to 0$ a.e.~in $\Omega$. 
    Finally, property (ii) proved in
    Step~\textbf{2} implies $s^{\varepsilon}(t,x)\to\tilde s(t,x)$
  a.e.~in $\Omega$, completing the proof.\end{proof}

\begin{proof}[Proof of Theorem~\ref{thm:main}]
  By Theorem~\ref{thm:strongconvergence} we know that there exists a
  subsequence of wave front tracking approximate solutions constructed
  in Section~\ref{sec:FT}
  $\left(s^{\varepsilon},c^{\varepsilon},k^{\varepsilon}\right)$
  which converges strongly in $L^{1}_{loc}\left(\Omega\right)$
  to a limit $\left(\tilde s,\tilde c,\tilde k\right)$. Clearly
  $\tilde k_{t}=0$.
  Let $\phi$ be a test function with compact support in
  $\left[0,+\infty\right[\times \mathbb{R}$. By construction
  (see Section~\ref{sec:FT}) the approximate solutions satisfy
  \begin{align}
    \nonumber
    \int_{\Omega} \left[s^{\varepsilon}\phi_{t}+
    F^{\varepsilon}(t,x,s^{\varepsilon})\phi_{x}\right](t,x)\;
    dtdx
    +\int_{\mathbb{R}}\bar s^{\varepsilon} (x)\phi\left(0,x\right)\; dx =0,\\\nonumber
    \int_{\Omega} \left[c^{\varepsilon}s^{\varepsilon}
    \phi_{t}+c^{\varepsilon}F^{\varepsilon}(t,x,s^{\varepsilon})\phi_{x}\right](t,x)\;
    dtdx
    +\int_{\mathbb{R}}\bar c^{\varepsilon}\left(x\right)\bar s^{\varepsilon}
    (x)\phi\left(0,x\right)\; dx=0,\\
    \nonumber
    k^{\varepsilon}\left(t,x\right)=\bar k^{\varepsilon}(x), \quad \forall (t,x)\in \Omega.
  \end{align}
  The uniform estimate~\eqref{eq:unifEstimates} and the strong
  convergence of approximate solutions allows us to pass to the limit
  and to conclude that the limit $\left(\tilde s, \tilde c, \tilde
    k\right)$ satisfies Definition~\ref{def:main}.
\end{proof}

\bigskip

\noindent\textbf{Acknowledgment:} The present work was supported by
the PRIN~2015 project \emph{Hyperbolic Systems of Conservation Laws
  and Fluid Dynamics: Analysis and Applications} and by GNAMPA 2019
project \emph{Equazioni alle derivate parziali di tipo iperbolico o non locale ed applicazioni.}.
The authors  would like to thank the anonymous referee
for carefully reading the manuscript
and providing many useful suggestions.

  \bibliography{PFCC-CMS-Revised}

    \end{document}